\newcommand{\1}{\mbox{1}\hspace{-0.25em}\mbox{l}}
\newtheorem*{theorema}{Theorem A}
\newtheorem*{theoremb}{Theorem B}
\newtheorem*{theoremc}{Theorem C}
\newtheorem{prop}{Proposition}[section]
\newtheorem{lemma}[prop]{Lemma}
\newtheorem{cor}[prop]{Corollary}
\theoremstyle{definition}
\theoremstyle{remark}
\numberwithin{equation}{section}
\begin{document}

\author{Hiroki Takahasi and Kenichiro Yamamoto}

\address{Keio Institute of Pure and Applied Sciences (KiPAS), Department of Mathematics,
Keio University, Yokohama,
223-8522, JAPAN} 
\email{hiroki@math.keio.ac.jp}
\address{Department of general education, Nagaoka University of Technology, Nagaoka 940-2188, JAPAN}
\email{k\_yamamoto@vos.nagaokaut.ac.jp}

\subjclass[2020]{Primary 37A40; Secondary 37B10, 37D25, 37D35}
\thanks{{\it Keywords}:
piecewise affine map;
measure of maximal entropy; symbolic dynamics}


\title[Heterochaos baker maps and the Dyck system]
 {Heterochaos baker maps and the Dyck system:\\   maximal entropy measures and a mechanism for 
 the breakdown of entropy approachability} 
 \maketitle

 \begin{abstract}
 We introduce two parametrized families of piecewise affine maps on $[0,1]^2$ and $[0,1]^3$, as generalizations of the {\it heterochaos baker maps} which were introduced and investigated in  [Y. Saiki, H. Takahasi, J. A. Yorke, Nonlinearity, {\bf 34} (2021), 5744--5761] as minimal models of the unstable dimension variability in multidimensional dynamical systems.
   We show that  natural coding spaces of these maps 
coincide with the Dyck system that has come from the theory of languages.
Based on this coincidence, we start to develop 
a complementary analysis on their invariant measures.
As a first attempt, 
we show the existence of two ergodic measures of maximal entropy for the generalized
heterochaos baker maps. We also clarify a mechanism for the breakdown of entropy approachability.

    \end{abstract}

    \section{Introduction}
    Let $T$ be a Borel map acting on a compact metric space and
    let $\mathcal M(T)$ denote the set of $T$-invariant Borel probability measures. 
    For each $\mu\in\mathcal M(T)$, let $h(\mu)$ denote the measure-theoretic entropy of $\mu$ relative to $T$.
     If 
    $\rho(T)=\sup \{h(\mu)\colon\mu\in\mathcal M(T)\}$ is finite,
    a measure which attains this supremum is called a 
    measure of maximal entropy (mme).
         Dynamical systems with the unique measure of maximal entropy are called
     intrinsically ergodic. Intrinsically ergodic systems are expected to have nice properties, such as equidistribution of periodic points  \cite{Bow71}, exponential decay of correlations  \cite{Bow75,Rue04,Sin72} and so on.
   An important problem is to establish the intrinsic ergodicity for as large a set of dynamical systems as possible. 
          Another important problem is to investigate systems which are not intrinsically ergodic: to ask how many ergodic mmes coexist and what are their properties, and so on.
     These questions are related to phase transitions in statistical mechanics \cite{Rue04}.

              Most of the non-trivial 
              concrete examples of non-intrinsically ergodic systems 
              are subshifts on
              finite symbols.
  Krieger \cite{Kri74} 
     showed that 
   the Dyck system on $2m$ symbols, $m\geq2$ has exactly two ergodic measures of maximal entropy $\log (m+1)$,
   both of which are Bernoulli and fully supported.
      For other subshift examples including classical ones, see e.g.
     \cite{CliPav19,DGS76,GK18,Hay13,KOR16,Pav16,Pet86,Tho06}.

   The two-sided full shift on two symbols is 
   topologically (semi-)conjugate to the baker map or Smale's horseshoe map.
Here we address the following question. Let $\Sigma$ be a transitive subshift with positive topological entropy that is non-intrinsically ergodic.
Does there exist a piecewise differentiable map that is topologically semi-conjugate to $\Sigma$? 
     If so,
   one can bring techniques and results in differentiable dynamics
   into the analysis of 
   the coexisting mmes, which will certainly lead to a deeper 
   understanding of 
   the phenomenon of non-intrinsic ergodicity.
  The fact is that to construct semi-conjugacies to one-dimensional maps is difficult,
  due to the result of Hofbauer \cite{Hof79,Hof81} which asserts that a large class of piecewise monotonic interval maps are intrinsically ergodic.
  To construct
  semi-conjugacies to
   piecewise affine surface homeomorphisms or surface diffeomorphisms 
    seems also difficult, in view of the results of Buzzi et al. \cite{Bu09,BCS22}.
 So, let us modify our key question as follows. Let $\Sigma$ be a transitive subshift with positive topological entropy that is non-intrinsically ergodic.
   Does there exist a piecewise differentiable map whose ``symbolic model'' is $\Sigma$? 



    In this paper we show that 
    the Dyck system
    is 
    a natural coding space of
    simple piecewise affine maps on $[0,1]^2$ or $[0,1]^3$.
   These maps are 
 generalizations of the {\it heterochaos baker maps} introduced by Saiki et al. \cite{STY21} for a different purpose.
 It is interesting that the two systems with completely different origins are related in this way.
On the basis of this relation we start to develop a complementary analysis on invariant probability measures of the two systems.
  Below
  we introduce generalized heterochaos baker maps, the Dyck system, and state main results.

 \subsection{Generalized heterochaos baker maps}
  Let $m\geq2$ be an integer and put
 $c_0=\frac{1}{m}$.
For $a\in(0,c_0)$
 let $F_a\colon[0,1]\circlearrowleft$ be given by 
\[F_a(x)=\begin{cases}
\displaystyle{\frac{x-(i-1)a}{a}}&\text{ on }[(i-1)a,ia),\ i\in\{1,\ldots,m\},\\ \displaystyle{\frac{x-ma}{1-ma}}&\text{ on }
[ma,1].
\end{cases}\]
Let $\Omega_i^+$, $i\in\{1,\ldots,2m\}$ be pairwise disjoint domains in $[0,1]^2$ given by
\[\Omega_i^+=\begin{cases}\left[(i-1)a,ia\right)\times
\left[0,1\right]&\text{ for }i\in\{1,\ldots,m\},\\
\left[ma,1\right]\times
\left[\frac{i-m-1}{m },\frac{i-m }{m }\right)&
\text{ for }i\in\{m+1,\ldots,2m-1\},\\
\left[ma,1\right]\times
\left[\frac{i-m-1 }{m},1\right]&\text{ for }i=2m.
\end{cases}\]
Note that $[0,1]^2=\bigcup_{i=1}^{2m}\Omega_i^+$.
Define a map $f_{a}\colon [0,1]^2\circlearrowleft$  by
\[\begin{split}
  f_a(x,y)=
  \begin{cases}
    \displaystyle{\left(F_a(x),\frac{y}{m}+\frac{i-1}{m}\right)}&\text{ on }\Omega_i^+,\ i\in\{1,\ldots,m\},\\
   \displaystyle{\left (F_a(x),my-i+m+1\right)}&\text{ on }\Omega_i^+,\ i\in\{m+1,\ldots,2m\}.
   \end{cases}
\end{split}\]
See FIGURE~1.
As $a\to0$ or $a\to c_0$, the map degenerates to the baker map.

Next,
put
$\Omega_i=\Omega_i^+\times
\left[0,1\right]$ and 
let $a,b\in(0,c_0)$.
Define a map $f_{a,b}\colon [0,1]^3\circlearrowleft$ by 
\[\begin{split}
  f_{a,b}(x,y,z)=
  \begin{cases}
    \displaystyle{\left(f_{a,b}(x,y),
    (1-mb)z\right)}&\text{ on }\Omega_i,\ i\in\{1,\ldots,m\}
    ,\\
   \displaystyle{\left (f_{a,b}(x,y),bz+1-mb+b(i-m-1)\right)}&\text{ on }\Omega_i,\ i\in\{m+1,\ldots,2m\}.
   \end{cases}
\end{split}\]
 See FIGURE~2. 
 The map $f_a$ is the projection of $ f_{a,b}$ to the $xy$-plane, and $ f_{a,b}$ is invertible while $f_a$ is not. 
By the symmetry $f_{a,b}^{-1}(x,y,z)= f_{b,a}(-z+1,y,x)$,
it does not lose generality to restrict to $(a,b)$ with $a\geq b$.
We call $f_a$ and $f_{a,b}$
{\it generalized heterochaos baker maps}.
If the context is clear,
we will drop $a,b$ from notation.

\begin{figure}
\begin{center}
\includegraphics[height=3.4cm,width=9.5cm]
{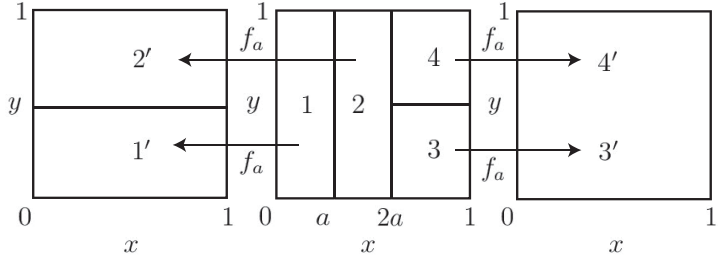}
\caption
{The map $f_a$
for $m=2$. The domains $\Omega_1^+$, $\Omega_2^+$, $\Omega_3^+$, $\Omega_4^+$ are labeled with $1$, $2$, $3$, $4$
and their images 
are labeled with $1'$, $2'$, $3'$, $4'$
respectively: $f_a(\Omega_3^+)=[0,1]\times[0,1)$ and $f_a(\Omega_4^+)=[0,1]^2$.}
\end{center}
\end{figure}\label{fig}

 \begin{figure}
\begin{center}
\includegraphics[height=4cm,width=9.5cm]
{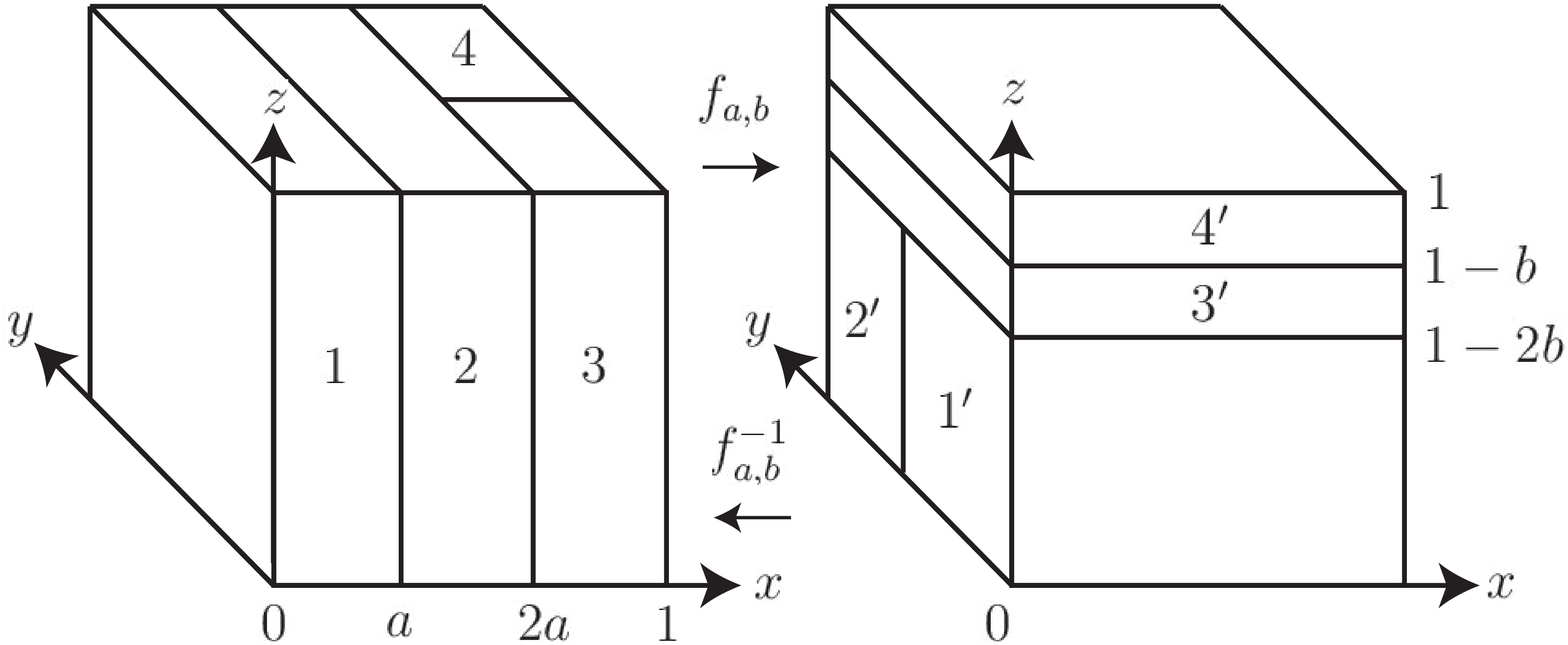}
\caption
{The map $f_{a,b}$ and its inverse $f_{a,b}^{-1}$ for $m=2$. The domains $\Omega_1$,
$\Omega_2$, $\Omega_3$, $\Omega_4$ are labeled with $1$, $2$, $3$, $4$
and their images under $f_{a,b}$ are 
labeled with $1'$, $2'$, $3'$, $4'$ respectively.}
\end{center}\label{fig2}
\end{figure}

  
 The maps $f_{\frac{1}{3}}$ and $f_{\frac{1}{3},\frac{1}{6}}$ for $m=2$ 
were introduced in \cite{STY21} as minimal models of the {\it unstable dimension variability}
\cite{DGSY94,KKGOY}.
 For multidimensional dynamical systems, it often happens that
in some regions the dynamics is unstable in 
 more directions than in other regions. This suggests that the dimension
 of unstable directions, when defined, 
 is not constant and varies from point to point.
 The unstable dimension variability means this situation, which was first considered by Abraham and Smale \cite{AS70} and is now believed to be a
 ubiquitous phenomenon for multidimensional systems which are not uniformly hyperbolic.
 
     Under the iteration of $f=f_{a,b}$,
the $x$-direction is expanding by factor $\frac{1}{a}$ or $\frac{1}{1-ma}$,
 the $z$-direction is contracting by factor
$(1-mb)$ or $b$,
while the $y$-direction is central:
contracting by factor $1/m$ on $\bigcup_{i=1}^{m} \Omega_i$ and expanding by factor $m$ on $\bigcup_{i=m+1}^{2m} \Omega_{i}$.
In particular, $f$ is not a uniformly hyperbolic system.
The stability in the central direction along each orbit is determined by the asymptotic time average of 
the function 
which takes $-\log m$ on 
$\bigcup_{i=1}^{m} \Omega_i$ and
$\log m$ on
$\bigcup_{i=m+1}^{2m} \Omega_i$.
 If $p\in X$ and $n\geq1$,
 the average value of this function 
 over $\{f^{k}(p)\}_{k=0}^{n-1}$
  equals the
exponential growth rate of the second diagonal element of the Jacobian matrix of $f^n$ at $p$.
If $f^n(p)=p$, the unstable dimension of the periodic point $p$ (the dimension of the unstable subbundle of $T_p\mathbb R^3$) is either $1$ or $2$ according as the average value
is negative or positive. 
  For
   $f_{\frac{1}{3},\frac{1}{6}}$, there is a dense orbit 
   and both types of periodic points are dense 
\cite[Theorem~1.1]{STY21}.
Using the arguments in \cite{STY21}
one can show that these properties remain true for all $m\geq2$ and all $a,b\in(0,c_0)$.


\subsection{The Dyck system}\label{Dyck}
The theory of symbolic dynamics has a close relationship to automata theory and language theory. 
In the theory of languages, there is a universal language due to W. Dyck. 
The Dyck system \cite{AU68,Kri74} is
the symbolic dynamics generated by that language.
It is a subshift on the alphabet 
$D=\{\alpha_1,\ldots,\alpha_m,\beta_1,\ldots,\beta_m\}$
consisting of $m$ brackets, $\alpha_i$ left and $\beta_i$ right in pair,
whose admissible words are words of legally aligned brackets.
To be more precise,
let $D^*$ denote the set of finite words of elements of $D$.
Consider the monoid with zero, with $2m$ generators in $D$ and the unit element
$1$ with relations 
\[\alpha_i\cdot\beta_j=\delta_{i,j},\
0\cdot 0=0\text{ and }\] \[\gamma\cdot 1= 1\cdot\gamma=\gamma,\
 \gamma\cdot 0=0\cdot\gamma=0
\text{ for }\gamma\in D^*\cup\{ 1\},\]
where $\delta_{i,j}$ denotes Kronecker's delta.
For $n\geq1$ and $\gamma_1\gamma_2\cdots\gamma_n\in D^*$ 
let
\[{\rm red}(\gamma_1\cdots\gamma_n)=\prod_{i=1}^n\gamma_i.\]
The one and two-sided Dyck shifts on $2m$ symbols are respectively defined by
\[\begin{split}\Sigma_{D}^+&=\{\omega\in D^{\mathbb Z^+}\colon {\rm red}(\omega_i\cdots \omega_j)\neq0\text{ for }i,j\in\mathbb Z^+\text{ with }i<j\},\\
\Sigma_{D}&=\{\omega\in D^{\mathbb Z}\colon {\rm red}(\omega_i\cdots \omega_j)\neq0\text{ for }i,j\in\mathbb Z\text{ with }i<j\},\end{split}\]
where $\mathbb Z^+$ denotes the set of non-negative integers.
Note that $\Sigma_D$ is the natural extension of $\Sigma_D^+$.

\subsection{Statements of results}
As inferred from their definitions,
the generalized heterochaos baker maps 
have natural coarse symbolic representations.
We put
\[X_a=\bigcap_{n\in\mathbb Z^+}f_a^{-n}\left(\bigcup_{i=1}^{2m}{\rm int}( \Omega_i^+)\right)\quad\text{ and }\quad X_{a,b}=\bigcap_{n\in\mathbb Z}f_{a,b}^{-n}\left(\bigcup_{i=1}^{2m}{\rm int}(\Omega_i)\right),\]
 and
define {\it coding maps} 
 $\pi_{a}\colon (x,y)\in X_a\mapsto
(\omega_n)_{n\in\mathbb Z^+}\in\{1,\ldots,2m\}^{\mathbb Z^+}$ 
by \[(x,y)\in 
\bigcap_{n\in\mathbb Z^+}f_a^{-n}({\rm int}(
\Omega_{\omega_n}^+)),\]
 and $\pi_{a,b}\colon (x,y,z)\in X_{a,b}\mapsto
 (\omega_n)_{n\in\mathbb Z}\in\{1,\ldots,2m\}^{\mathbb Z}$ by
  \[(x,y,z)\in\bigcap_{n\in\mathbb Z} f_{a,b}^{-n}({\rm int}
  (\Omega_{\omega_n})).\]
   The corresponding coding spaces \[\Sigma_{H\!C}^+= {\rm cl}( \pi_{a}(X_a))\quad\text{and}\quad\Sigma_{H\!C}=
   {\rm cl}(\pi_{a,b}(X_{a,b}))\]
  are subshifts on $2m$ symbols which are independent of $(a,b)$. We call them {\it heterochaos shifts}.
  Let $\sigma^+\colon\Sigma_{H\!C}^+\circlearrowleft$ and $\sigma\colon\Sigma_{H\!C}\circlearrowleft$ denote the left shifts.
   We have $\sigma^+\circ\pi_{a}=\pi_{a}\circ f_{a}$ and  $\sigma\circ\pi_{a,b}=\pi_{a,b}\circ f_{a,b}.$
   Since $f_a$ and $f_{a,b}$ are transitive, $\Sigma_{H\!C}^+$ and $\Sigma_{H\!C}$
    are irreducible.
    The coding maps are continuous, 
  neither surjective (Lemma~\ref{cset})
 nor injective.
For example, there exist infinitely many continua of periodic points which collapse into a single periodic point in the coding spaces.

 We say a subshift
 $\Sigma_1\subset S_1^{\mathbb Z^+}$ is {\it isomorphic} to 
 a subshift $\Sigma_2\subset S_2^{\mathbb Z^+}$ if there exists a bijection
 $\kappa\colon S_1\to S_2$ such that the map  $(\omega_n)_{n\in\mathbb Z^+}\in \Sigma_1\mapsto (\kappa(\omega_n))_{n\in\mathbb Z^+}\in S_2^{\mathbb Z^+}$ is a homeomorphism from $\Sigma_1$ to $\Sigma_2$.
 If $\Sigma_1$ is isomorphic to $\Sigma_2$, we write $\Sigma_1\simeq \Sigma_2$. Isomorphisms between 
 two-sided subshifts are defined analogously.
 Our first result asserts that the heterochaos shifts are isomorphic to the Dyck shifts. 

\begin{theorema}[Realization of the Dyck system]
We have
$\Sigma_{H\!C}^+\simeq\Sigma_{D}^+$ and
$\Sigma_{H\!C}\simeq\Sigma_{D}$.
The isomorphism
$\kappa\colon\{1,\ldots,2m\}\to D$ is given by
 $\kappa(i)=\alpha_i$, $\kappa(i+m)=\beta_i$ for $i\in\{1,\ldots, m\}$.
\end{theorema}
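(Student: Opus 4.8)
The plan is to prove the two displayed isomorphisms as literal set equalities $\tilde\kappa(\Sigma_{H\!C}^{+})=\Sigma_{D}^{+}$ and $\tilde\kappa(\Sigma_{H\!C})=\Sigma_{D}$, where $\tilde\kappa\colon(\omega_n)\mapsto(\kappa(\omega_n))$; since a bijective relabeling of alphabets automatically induces a homeomorphism of the ambient full shifts, this gives the asserted isomorphisms. The heart of the matter is a dictionary between the central ($y$-)coordinate of $f_a$ and a pushdown stack over $\{0,\dots,m-1\}$. Writing $y\in[0,1]$ in base $m$ as $y=0.d_1d_2\cdots$, one reads off from the definition of $f_a$ that on $\Omega_i^{+}$ with $i\le m$ the map $y\mapsto y/m+(i-1)/m$ \emph{prepends} the digit $i-1$, while on $\Omega_i^{+}$ with $i>m$ the map $y\mapsto my-i+m+1$ is defined precisely where $d_1=i-m-1$ and there \emph{deletes} the leading digit. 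Thus, along an orbit $f_a^{n}(x,y)=(x_n,y_n)$, the base-$m$ expansion of $y_n$ is a stack to which a symbol $\omega_n\le m$ (identified with $\alpha_{\omega_n}$) pushes $\omega_n-1$ and which a symbol $\omega_n>m$ (identified with $\beta_{\omega_n-m}$) pops, legally only if the top symbol equals $\omega_n-m-1$. The $x$-coordinate imposes no constraint: $F_a$ is a full-branch expanding map (branches $1,\dots,m$ onto $[0,1)$ and $[ma,1]$ onto $[0,1]$), so any prescribed itinerary of $x$ with respect to $\{[0,a),\dots,[(m-1)a,ma),[ma,1]\}$ is realized, and—avoiding the countably many preimages of partition endpoints—realized by points whose whole forward orbit stays in the open branches.

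For the inclusion $\tilde\kappa(\Sigma_{H\!C}^{+})\subseteq\Sigma_{D}^{+}$ I would first record the elementary fact that, for $w\in D^{*}$, ${\rm red}(w)=0$ if and only if the left-to-right stack reduction of $w$ (push each $\alpha_i$; on reading $\beta_l$, if the top is some $\alpha_i$ then fail unless $i=l$ in which case pop, otherwise push $\beta_l$) fails at some step—this is just confluence of the defining relations of the Dyck monoid. Now if $(x,y)\in X_a$ and some prefix $\kappa(\omega_0)\cdots\kappa(\omega_N)$ reduced to $0$, then at the first failing step $k$ the stack top is a digit that was genuinely \emph{pushed} at an earlier step, hence really equals the leading digit of $y_k$, and it differs from $\omega_k-m-1$; this contradicts $f_a^{k}(x,y)\in{\rm int}(\Omega_{\omega_k}^{+})$. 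Hence $\tilde\kappa(\pi_a(X_a))\subseteq\Sigma_{D}^{+}$, and since $\Sigma_{D}^{+}$ is closed this passes to ${\rm cl}(\pi_a(X_a))=\Sigma_{H\!C}^{+}$. (Because $0$ is absorbing, $\Sigma_D^+$ is exactly the set of sequences all of whose \emph{prefixes} have nonzero reduction, which is the form used here.)

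For the reverse inclusion $\Sigma_{D}^{+}\subseteq\tilde\kappa(\Sigma_{H\!C}^{+})$: given $\omega$ with $\tilde\kappa(\omega)\in\Sigma_{D}^{+}$ and $N\in\mathbb Z^{+}$, I would exhibit $(x,y)\in X_a$ whose itinerary agrees with $\omega$ on $\{0,\dots,N\}$, placing $\omega$ in ${\rm cl}(\pi_a(X_a))=\Sigma_{H\!C}^{+}$. Running the stack reduction on $\kappa(\omega_0)\cdots\kappa(\omega_N)$: Dyck-admissibility means it never fails, so every pop either removes a previously pushed digit or, in case of underflow, consumes the leading digits $s_1,s_2,\dots$ of the initial stack in order, each forced to a prescribed value in $\{0,\dots,m-1\}$. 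Choosing $y$ with those finitely many leading digits prescribed and a non-$m$-adic-rational tail makes every $y_n$ non-$m$-adic-rational, so all pop steps land in the open intervals; this fixes $y$. For $x$: pick $i\in\{2,\dots,m\}$ (possible as $m\ge2$), so that the branch map $t\mapsto(t-(i-1)a)/a$ has fixed point $p_i=(i-1)a/(1-a)$ inside the \emph{open} interval $((i-1)a,ia)$, and choose $x$ realizing the prescribed $F_a$-itinerary on $\{0,\dots,N\}$ with $F_a^{N+1}(x)=p_i$; fullness of the branches makes this possible, the orbit after time $N$ stays at $p_i$ in an open branch, and the appended symbols are all ``$\alpha_i$'', causing no underflow and requiring no further digits of $y$. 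The resulting $(x,y)\in X_a$ has itinerary $\omega_0\cdots\omega_N\,i\,i\,i\cdots$, as required.

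Finally, the two-sided statement: the sets $\Omega_i=\Omega_i^{+}\times[0,1]$ impose no constraint on the $z$-coordinate, whose sole role is to render $f_{a,b}$ invertible so that bi-infinite itineraries are well defined; the constraints thus again come only from the central coordinate, now read in both time directions. Going backward the push/pop rôles of $\alpha$ and $\beta$ interchange, matching the ``reverse and swap $\alpha_i\leftrightarrow\beta_i$'' symmetry of the Dyck condition, so the same argument—realizing an arbitrary finite window of a Dyck-admissible bi-infinite word by a point of $X_{a,b}$, the $z$-coordinate fixing a consistent preimage branch for $x$—gives $\tilde\kappa(\Sigma_{H\!C})=\Sigma_{D}$; equivalently one checks that $\Sigma_{H\!C}$ is the natural extension of $\Sigma_{H\!C}^{+}$ via the projection $f_a$ of $f_{a,b}$ and invokes the same fact for the Dyck shifts. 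I expect the main obstacle to be precisely this ``surjectivity up to closure'' step: making the stack dictionary rigorous while keeping every iterate in the \emph{open} domains ${\rm int}(\Omega_i^{+})$ (resp.\ ${\rm int}(\Omega_i)$)—that is, controlling the $m$-adic-rational degeneracies of the $y$-coordinate and the partition-endpoint degeneracies of the $x$-coordinate—together with the one elementary monoid lemma translating between ${\rm red}(\cdot)=0$ and an illegal pop.
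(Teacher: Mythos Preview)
Your proof is correct, but it takes a genuinely different route from the paper's.

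The paper proceeds via \emph{follower set graphs}: it computes (Proposition~2.1) that the follower sets of $\Sigma_{H\!C}^{+}$ are indexed by words $\xi\in L(\{1,\dots,m\}^{\mathbb Z^+})$, with labeled edges $\Theta^{\xi}\xrightarrow{i}\Theta^{\xi i}$ and $\Theta^{\xi i}\xrightarrow{i+m}\Theta^{\xi}$ for $i\le m$, together with loops $\Theta^{\emptyset}\xrightarrow{i}\Theta^{\emptyset}$ for $i>m$. This graph is visibly isomorphic under $\kappa$ to the follower set graph of $\Sigma_{D}^{+}$ computed from the Dyck relations, and since any subshift coincides with the set of infinite labeled paths in its follower set graph, the one-sided isomorphism follows in a single line. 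The two-sided statement is then reduced, exactly as in your alternative, to checking that $\Sigma_{H\!C}$ is the natural extension of $\Sigma_{H\!C}^{+}$ (Proposition~2.2).

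Your argument instead proves the set equality $\tilde\kappa(\Sigma_{H\!C}^{+})=\Sigma_{D}^{+}$ directly through the base-$m$ stack dictionary for the $y$-coordinate, together with an explicit construction of a point of $X_a$ realizing any prescribed Dyck-admissible prefix. This is longer and requires the boundary-avoidance care you describe (the fixed-point trick for $x$ with $i\ge2$ and the non-$m$-adic tail for $y$, both of which are handled correctly), but it has the virtue of making the mechanism behind the theorem completely transparent---the central coordinate literally \emph{is} the pushdown stack---and it produces explicit coding preimages that the follower-set argument does not. The paper's approach, by contrast, is considerably shorter and packages all the combinatorics into a single graph computation, sidestepping any digit bookkeeping or interior-of-domain issues.
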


 Concerning
mmes of $f_{a,b}$ we have the following result. 
The corresponding statement for $f_a$ is also true,
 since
the canonical projection $\Sigma_{D}\to\Sigma_{D}^+$ induces a homeomorphism from $\mathcal M(\sigma)$ to $\mathcal M(\sigma^+)$ that preserves ergodicity and entropy of invariant measures.
  Put $c_1=\frac{1}{m(m+1)}$,
 $c_2=\frac{1}{m+1}$.

     \begin{theoremb}[Measures of maximal entropy]
For all $a,b\in(0,c_0)$ with $a\geq b$,
there exist two $f_{a,b}$-invariant
 ergodic Borel probability measures $\mu_\alpha$, $\mu_\beta$ 
 of entropy $\log (m+1)$ which are Bernoulli, 
 give positive weight to any nonempty open subset of $[0,1]^3$, and satisfy
\[\mu_\alpha\left(\bigcup_{i=1}^m\Omega_i\right)=
\mu_\beta\left(\bigcup_{i=m+1}^{2m}\Omega_i\right)=\frac{m}{m+1}.\]
Moreover,
 if $a\in[c_1,c_2]$ 
then $\rho(f_{a,b})=\log (m+1)$.
If $a\in(c_1,c_2)$,
there is no  ergodic measure of maximal entropy for $f_{a,b}$ other than $\mu_\alpha$, $\mu_\beta$.
\end{theoremb}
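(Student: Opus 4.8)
\emph{Proof plan.} The existence statement will come from Theorem~A together with Krieger's theorem, and the two remaining assertions from a fibered entropy analysis over the one-dimensional map $F_a$.

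\textbf{Step 1: the measures $\mu_\alpha$, $\mu_\beta$.} By Theorem~A, $\Sigma_{H\!C}$ is isomorphic via $\kappa$ to the Dyck shift $\Sigma_D$, which by Krieger \cite{Kri74} carries exactly two ergodic measures of maximal entropy: they have entropy $\log(m+1)$, are Bernoulli and fully supported, and are distinguished by the fact that one gives mass $m/(m+1)$ to the cylinder ``a left bracket sits at coordinate $0$'' and the other gives that mass to ``a right bracket sits at coordinate $0$''. Transport them to $\Sigma_{H\!C}$ through $\kappa$ and call them $\nu_\alpha$, $\nu_\beta$. I will lift them through the coding map $\pi_{a,b}$ by constructing a measurable ``geometric realization'' $\Phi\colon\Sigma_{H\!C}\to[0,1]^3$: the $x$-coordinate of $\Phi(\omega)$ is the point whose forward $F_a$-itinerary is dictated by $(\omega_n)_{n\geq0}$, the $z$-coordinate is dictated by $(\omega_n)_{n\leq0}$, and the $y$-coordinate encodes, in base $m$, the stack of as-yet-unmatched left brackets read into the past --- this is exactly the mechanism behind Theorem~A. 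One checks that $\sigma\circ\Phi=\Phi\circ f_{a,b}$, that $\pi_{a,b}\circ\Phi=\mathrm{id}$, that $\Phi\circ\pi_{a,b}=\mathrm{id}$ on $X_{a,b}$, and that $\Phi$ is injective off $\Phi^{-1}([0,1]^3\setminus X_{a,b})$. The essential point is that $\mu_\bullet:=\Phi_*\nu_\bullet$ gives zero mass to the singular set $[0,1]^3\setminus X_{a,b}$ (a countable union of hyperplanes); this reduces to a tail-triviality statement for the Bernoulli measures $\nu_\bullet$. Granting it, $\pi_{a,b}$ becomes a measure isomorphism between $(f_{a,b},\mu_\bullet)$ and $(\sigma,\nu_\bullet)$, so $\mu_\alpha,\mu_\beta$ are ergodic, Bernoulli and of entropy $\log(m+1)$; they are fully supported because $\Phi(\Sigma_{H\!C})\supseteq\Phi(\pi_{a,b}(X_{a,b}))=X_{a,b}$ is dense in $[0,1]^3$ and $\nu_\bullet$ is fully supported; and the masses follow since $\Phi^{-1}\bigl(\bigcup_{i=1}^m\Omega_i\bigr)$ coincides $\nu_\bullet$-a.e.\ with the ``left bracket at $0$'' cylinder.

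\textbf{Step 2: $\rho(f_{a,b})=\log(m+1)$.} Step~1 gives ``$\geq$''. For ``$\leq$'', note that the projection $p_x\colon(x,y,z)\mapsto x$ sends $f_{a,b}$ onto $F_a$, a full-branch affine map with $m+1$ branches, so $\sup\{h(\nu):\nu\in\mathcal M(F_a)\}=\log(m+1)$ and $F_a$ has a unique measure of maximal entropy. Since $Df_{a,b}$ is everywhere diagonal, an ergodic $\mu\in\mathcal M(f_{a,b})$ has Lyapunov exponents $\lambda_x(\mu)>0$, $\lambda_z(\mu)<0$ and central exponent $\lambda_y(\mu)=(1-2p)\log m$, where $p:=\mu\bigl(\bigcup_{i=1}^m\Omega_i\bigr)$. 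The Abramov--Rokhlin formula and the relative Ruelle inequality for $p_x$ give $h(\mu)=h((p_x)_*\mu)+h(\mu\mid p_x)\leq\log(m+1)+\lambda_y(\mu)^+$. If $p\geq\tfrac12$ this is already $\leq\log(m+1)$. If $p<\tfrac12$, combine it with the elementary bound $h((p_x)_*\mu)\leq H(p)+p\log m$ (the one-step entropy of $(p_x)_*\mu$ over the generating partition of $F_a$ into $m+1$ intervals, $m$ of which carry total mass $p$; here $H(p)=-p\log p-(1-p)\log(1-p)$) to get $h(\mu)\leq H(p)+(1-p)\log m$, whose maximum over $p$ equals $\log(m+1)$. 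Hence $\rho(f_{a,b})=\log(m+1)$.

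\textbf{Step 3: uniqueness for $a\in(c_1,c_2)$.} Let $\mu$ be ergodic with $h(\mu)=\log(m+1)$. If $\lambda_y(\mu)\leq0$, then every inequality in Step~2 is an equality: $(p_x)_*\mu$ is the unique maximal measure of $F_a$, which forces $p=m/(m+1)$ (hence $\lambda_y(\mu)<0$), and $h(\mu\mid p_x)=0$; since both fiber exponents are then strictly negative, the fibers are contracted on average along $\mu$-typical orbits, so the zero-relative-entropy lift of a fixed base measure is unique, whence $\mu=\mu_\alpha$. If $\lambda_y(\mu)>0$, equality in the second chain forces $p=1/(m+1)$, forces $(p_x)_*\mu$ to be the specific Bernoulli measure saturating the one-step bound, and forces $h(\mu\mid p_x)=\lambda_y(\mu)$, i.e.\ $\mu$ to be an SRB-type lift in the central direction; such a lift over a fixed base is unique, whence $\mu=\mu_\beta$. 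Thus $\mu\in\{\mu_\alpha,\mu_\beta\}$.

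\textbf{Where it is hard.} The crux is the case of positive central exponent: there the relative Ruelle inequality alone is too lossy, and one must combine it with the push-frequency bound and, for uniqueness, with the rigidity of SRB-type lifts over a neutral-to-expanding central base. This is where I expect the arithmetic hypothesis $a\in[c_1,c_2]$ --- equivalently $\tfrac1{m+1}\leq ma\leq\tfrac m{m+1}$ --- to be invoked, pinning the maximizers of the relevant extremal problems at the frequencies $p=m/(m+1)$ and $p=1/(m+1)$ carried by $\mu_\alpha$, $\mu_\beta$ and excluding invariant measures supported on the singular hyperplane lamination from achieving entropy $\log(m+1)$. The measurable-inverse step of Step~1 --- verifying that $\mu_\alpha,\mu_\beta$ do not charge the singular set, so that the isomorphism with $\nu_\alpha,\nu_\beta$ is genuine --- is the other point that requires care.
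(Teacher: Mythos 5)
Your outline captures the right high-level structure, but the route you take for the upper bound and the uniqueness is genuinely different from the paper's, and it leaves two substantial technical lemmas unproved that the paper handles in a different (and more self-contained) way.

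In Step 1 you essentially follow the paper: transfer Krieger's mmes through the Theorem~A isomorphism and lift them along the coding map. The step you flag as delicate (the lifted measures should not charge the singular set / the coding should be an a.e.\ bijection) is precisely what the paper proves via Lemma~\ref{cset}, Corollary~\ref{cset-cor} (showing $\nu_\alpha(\bar\pi(X))=\nu_\beta(\bar\pi(X))=1$ via Poincar\'e recurrence on cylinders) and Lemma~\ref{m-bij} (injectivity of $\bar\pi$ on $\bar\pi^{-1}(A)$, proved by expanding diameters of $y$-segments). You correctly identify the issue but do not resolve it; your proposed reduction to ``tail-triviality of the Bernoulli measures'' is not how the paper argues and is not obviously sufficient, since the obstruction is not a tail event in the symbolic system but a geometric one (whether the nested rectangles degenerate to an endpoint).

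In Step 2 your route diverges sharply from the paper's. The paper proves a (non-relative) Ruelle-type inequality $h(\mu)\le\chi^u(\mu)+\max\{\chi^c(\mu),0\}$ (Lemma~\ref{ruelle-ineq}), and then, for measures carried by $A_0$ (where $\chi^c=0$), bounds $\chi^u(\mu)\le H(a)$ (Lemma~\ref{zero-u}); the hypothesis $a\in[c_1,c_2]$ enters exactly because $H(a)\le\log(m+1)$ only there. You instead invoke Abramov--Rokhlin together with a \emph{relative} Ruelle inequality over the one-dimensional factor $F_a$, plus a one-step entropy bound for $(p_x)_*\mu$, and your computation would give $h(\mu)\le\log(m+1)$ \emph{for all} $a\in(0,c_0)$, with no need for $a\in[c_1,c_2]$. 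That would be a strictly stronger statement than the paper's, which is a warning sign: the relative Ruelle inequality for a skew product whose fibre maps are discontinuous piecewise affine baker-type maps is not an off-the-shelf theorem, and the paper's Lemma~\ref{ruelle-ineq} shows that even the non-relative version in this setting requires a carefully tailored partition construction (rectangles aligned with the affinity pieces, the $m$-adic structure in $y$, etc.). You would need to prove a relative version of Lemma~\ref{ruelle-ineq} before Step 2 can stand; as written it is a genuine gap. (Note also that your remark ``this is where I expect $a\in[c_1,c_2]$ to be invoked'' is inconsistent with your own Step 2, which never uses it.)

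Step 3 is the weakest part. For the case $\lambda_y(\mu)\le0$ you invoke ``the zero-relative-entropy lift of a fixed base measure is unique'' and for $\lambda_y(\mu)>0$ ``rigidity of SRB-type lifts''; neither claim is justified, and both are nontrivial assertions about non-uniformly hyperbolic skew products with discontinuous fibre dynamics. The paper avoids this entirely: for ergodic measures carried by $\bar\pi^{-1}(A_\alpha\cup A_\beta)$ it transfers Krieger's uniqueness on the Dyck shift through the homeomorphism of Lemma~\ref{m-bij}, and for those carried by $\bar\pi^{-1}(A_0)$ it shows $h(\mu)\le H(a)<\log(m+1)$ strictly when $a\in(c_1,c_2)$, so none of them can be an mme. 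That is a much cleaner route and is where the open condition $a\in(c_1,c_2)$ is actually used. If you want to keep your fibered approach, you would need to prove the two rigidity claims you invoke, which seems at least as hard as the paper's Lemmas~\ref{ruelle-ineq}--\ref{zero-u}.
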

In other words,
$\mu_\alpha$
is biased toward the symbols  $1,\ldots,m$,
and $\mu_\beta$ is biased toward the symbols $m+1,\ldots,2m$.
In fact,
for $a=c_1$ and $a=c_2$, one of the two mmes is the Lebesgue measure on $[0,1]^3$.
Statistical properties of the mmes for $f$ and the Dyck system will be treated in our forthcoming papers.

The half and half combination of the two mmes in Theorem~B
is totally balanced, and as a result has a peculiar approximation property by ergodic measures.
     For a sequence $\{\mu_n\}_{n=1}^\infty$ in $\mathcal M(T)$ which converges to $\mu\in\mathcal M(T)$ in the weak* topology, we 
write $\mu_n\to\mu$. 
 We say a non-ergodic measure $\mu\in\mathcal M(T)$ is
{\it entropy approachable by ergodic measures} if 
there exists 
a sequence $\{\mu_n\}_{n=1}^\infty$ of ergodic measures
in $\mathcal M(T)$
such that $\mu_n\to\mu$ and $\liminf_{n\to\infty}h(\mu_n)\geq h(\mu)$.
We say $\mu$ is
{\it approachable by ergodic measures} if it is contained in the closure of the set of ergodic measures.
\begin{theoremc}[Non-entropy approachability]
For all $a\in(c_1,c_2)$, $b\in(0,c_0)$ with $a\geq b$, the measure $\frac{1}{2}(\mu_\alpha+\mu_\beta)$
 is approachable and not entropy approachable by ergodic measures. 
\end{theoremc}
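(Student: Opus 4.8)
The plan is to treat the two claims separately, the first (approachability) being relatively soft and the second (failure of entropy approachability) carrying the main content.

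\emph{Approachability.} I would realize $\tfrac12(\mu_\alpha+\mu_\beta)$ as a weak* limit of periodic orbit measures of $f_{a,b}$. Using Theorem~A I work in the Dyck shift $\Sigma_D$. Fix a large $N$; choose a word $U$ of length $N$ that is $\mu_\alpha$-typical and a word $V$ of length $N$ that is $\mu_\beta$-typical, and arrange — this is the only combinatorial point — that $V$ be chosen so that the sequence of right brackets it sends below the baseline exactly matches and cancels the pile of $\asymp\frac{m-1}{m+1}N$ unmatched left brackets left by $U$, so that $(UV)^\infty\in\Sigma_D$. Such a $V$ exists by counting: the number of $\mu_\beta$-typical words of length $N$ grows like $(m+1)^N$, which dominates the number $m^{\asymp\frac{m-1}{m+1}N}$ of possible pile contents because $\log(m+1)>\frac{m-1}{m+1}\log m$. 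A fibre $\pi_{a,b}^{-1}((UV)^\infty)$ is a box, so by Brouwer it contains a periodic point of $f_{a,b}$; the associated periodic measure puts half of its mass on a $\mu_\alpha$-typical segment and half on a $\mu_\beta$-typical segment, hence converges to $\tfrac12(\mu_\alpha+\mu_\beta)$ as $N\to\infty$ (using that $\pi_{a,b}$ is essentially one-to-one along $\mu_\alpha$ and $\mu_\beta$ to pin down the limit). These periodic measures have zero entropy, in agreement with the second claim.

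\emph{Failure of entropy approachability.} Suppose for contradiction there are ergodic $\nu_n\to\tfrac12(\mu_\alpha+\mu_\beta)$ with $\liminf_n h(\nu_n)\ge h(\tfrac12(\mu_\alpha+\mu_\beta))=\log(m+1)$. Since $a\in(c_1,c_2)$, Theorem~B gives $\rho(f_{a,b})=\log(m+1)$, so $h(\nu_n)\to\log(m+1)$. Put $p_n=\nu_n(\bigcup_{i=1}^m\Omega_i)$. As $\bigcup_{i=1}^m\Omega_i=\{x<ma\}$ has boundary of zero $\mu_\alpha$- and $\mu_\beta$-measure, it is a continuity set of the limit, so by the weights in Theorem~B, $p_n\to\tfrac12\big(\tfrac{m}{m+1}+\tfrac1{m+1}\big)=\tfrac12$. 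The heart of the matter is the entropy estimate: for every ergodic $\mu$ on $\Sigma_D$ with left-bracket frequency $p$,
\[
h(\mu)\ \le\ H(p,1-p)+\max(p,1-p)\log m,\qquad H(p,1-p)=-p\log p-(1-p)\log(1-p).
\]
To prove this, apply the Abramov--Rokhlin formula to the factor map $\Sigma_D\to\{\alpha,\beta\}^{\mathbb Z}$ that forgets bracket indices: the base has entropy at most $H(p,1-p)$, and the fibre entropy is at most $(1-\mu^*)\log m$, where $\mu^*$ is the frequency of matched bracket pairs, because given the $\alpha/\beta$-itinerary the residual information amounts to one free index in $\{1,\dots,m\}$ per matched pair and per unmatched bracket. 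The key fact is that $\mu^*=\min(p,1-p)$ for ergodic $\mu$: considering the $\pm1$ walk $S_n$ (mean $2p-1$), ergodicity gives $\liminf_n S_n=-\infty$ $\mu$-a.e.\ when $p\le\tfrac12$, which forces every left bracket to have a matching right bracket (symmetrically when $p\ge\tfrac12$), so the ``minority'' brackets all get matched and $1-\mu^*=\max(p,1-p)$. Transferring this to $f_{a,b}$ through $\pi_{a,b}$ costs only an extra fibre term, at most (frequency of unmatched right brackets)$\cdot\log m$, which is $\le(1-2p_n)\log m\to 0$; hence
\[
\limsup_n h(\nu_n)\ \le\ H(\tfrac12,\tfrac12)+\tfrac12\log m\ =\ \log(2\sqrt m)\ <\ \log(m+1),
\]
the last inequality being $(\sqrt m-1)^2>0$. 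This contradicts $h(\nu_n)\to\log(m+1)$, finishing the proof.

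The step I expect to require the most care is the identity $\mu^*=\min(p,1-p)$, i.e.\ the precise behaviour of the bi-infinite bracket matching of a $\mu$-generic point: one must argue that unmatched brackets on the majority side occur with zero frequency, which rests on the recurrence fact that a stationary ergodic $\pm1$ process with non-positive mean has partial sums with $\liminf=-\infty$. A secondary technical point is the entropy comparison between $f_{a,b}$ and $\sigma$ through the (non-injective) coding map — the collapsing continua of periodic points and, more seriously, the reading of ``primordial'' $y$-digits at unmatched right brackets must be shown to contribute at most the $o(1)$ fibre term above — and, for approachability, keeping the glued word $V$ typical while imposing the matching constraint.
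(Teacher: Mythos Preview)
Your argument is essentially correct and, for the non-entropy approachability half, takes a genuinely different route from the paper. The paper works geometrically on $[0,1]^3$: it proves a Ruelle-type inequality $h(\mu)\le\chi^u(\mu)+\max\{\chi^c(\mu),0\}$ for $f_{a,b}$, observes that balanced ergodic measures have $\chi^u(\mu)\approx H(a)=-\tfrac12\log\bigl(a(1-ma)\bigr)$, and concludes via $H(a)<\log(m+1)$ for $a\in(c_1,c_2)$. Your route is purely symbolic and yields the $a$-independent ceiling $\log(2\sqrt m)$; since $\log(2\sqrt m)=\min_a H(a)$ (attained at $a=\tfrac{1}{2m}$), your bound is never worse and usually sharper. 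Two technical points deserve tightening: the identity $\mu^*=\min(p,1-p)$ at $p=\tfrac12$ is exactly Atkinson's recurrence theorem for zero-mean integrable cocycles over an ergodic system, so invoke it by name; and your ``extra fibre term'' for the transfer through $\bar\pi$ is in fact zero rather than $(1-2p_n)\log m$, since for $p_n\neq\tfrac12$ the coding is a measure-theoretic isomorphism by Lemma~\ref{m-bij}, while for $p_n=\tfrac12$ the $\bar\pi$-fibres are intervals carried by affine bijections of zero exponent, whence the relative entropy vanishes. For approachability, the paper's construction is cleaner than yours and avoids the counting step that, as you wrote it, does not quite close: ``more typical $V$'s than stack contents'' gives non-injectivity, not the surjectivity you need. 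The paper instead exploits the involution $\rho(\alpha_i)=\beta_i$, $\rho(\beta_i)=\alpha_i$: taking $V=\rho(\omega_{n-1})\cdots\rho(\omega_0)$ for a $\nu_\alpha$-typical block $U=\omega_0\cdots\omega_{n-1}$ makes $(UV)^\infty\in\Sigma_D$ automatic and $V$ automatically $\nu_\beta$-typical, since $\nu_\beta=\nu_\alpha\circ\Phi^{-1}$ for the time-reversal-plus-$\rho$ map $\Phi$.
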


We say $T$ is {\it entropy dense}  if any non-ergodic measure in $\mathcal M(T)$ is entropy approachable by ergodic measures.
The entropy density is a useful property in theories such as
multifractal analysis and large deviations in which all invariant measures come into play, in order to reduce one's consideration to ergodic measures only.
Extensions of these theories to
non-entropy dense systems
will require more examples of non-ergodic measures 
which are approachable and not entropy approachable by ergodic measures.
Asaoka and the second-named author \cite{AY13} constructed such a measure for a $C^\infty$ non-transitive diffeomorphism on $\mathbb T^5$. 
Gelfert and Kwietniak \cite[Proposition~4.29]{GK18} constructed 
 a subshift example.
Theorem~C provides a new example of such measures.


\subsection{Structure of the paper}
The rest of this paper consists of two sections and one appendix.
In Section~2 we prove Theorem~A
by comparing labeled directed graphs associated with $\Sigma_D^+$ and $\Sigma_{H\!C}^+$.
In Section~3 we prove Theorems~B and C.
In the appendix we comment on the Motzkin system \cite{Ino10,M04}.

The proof of Theorem~B and that of Theorem~C are related and briefly outlined as follows.
In light of Theorem~A, let $\iota\colon \Sigma_{H\!C}\to\Sigma_D$ denote the isomorphism and put $\bar\pi=\iota\circ\pi$.
 We will denote the two ergodic mmes on $\Sigma_D$ by $\nu_\alpha$ and $\nu_\beta$,
and following \cite{Kri74} 
introduce pairwise disjoint shift invariant Borel subsets
$A_\alpha$, $A_\beta$ of $\Sigma_D$ such that 
$\nu_\alpha(A_\alpha)=1$ 
and $\nu_\beta(A_\beta)=1$. 
We show that $\bar\pi$ induces a bijection between 
$A_\alpha\cap\bar\pi(X)$ and $\bar\pi^{-1}(A_\alpha)$, and that between 
$A_\beta\cap\bar\pi(X)$ and $\bar\pi^{-1}(A_\beta)$ both of which
 preserve entropies and ergodicity of invariant measures
(Lemma~\ref{m-bij}).
We also show that 
$\nu_\alpha(\bar\pi(X))=1$ and
$\nu_\beta(\bar\pi(X))=1$.
Then $\nu_\alpha$ and $\nu_\beta$
are transferred to ergodic
invariant measures of $f$ of entropy $\log(m+1)$.

By \cite{Kri74},
any ergodic measure other than $\nu_\alpha$ and $\nu_\beta$ which give measure $1$ to the set $\bar\pi^{-1}(A_\alpha\cup A_\beta)$ has entropy strictly less than $\log(m+1)$.
Our task is to bound from above entropies of ergodic measures which give measure $0$ to $\bar\pi^{-1}(A_\alpha\cup A_\beta)$. To this end we introduce Lyapunov exponents (Section~\ref{lyapunov-sec}), and show that such measures have zero central Lyapunov exponent.
We prove a version of Ruelle's inequality \cite{Rue78} which bounds entropies of invariant measures in terms of their Lyapunov exponents (Lemma~\ref{ruelle-ineq}).
If the central Lyapunov exponent is sufficiently close to $0$ and $a\in(c_1,c_2)$, 
the unstable Lyapunov exponent does not exceed $\log(m+1)$ (Lemma~\ref{zero-u}).
Combining these two bounds yields
the desired entropy bound which verifies Theorem~B.
Using
this entropy bound, we also prove that 
{\it almost balanced non-ergodic measures with high entropy, like $\frac{1}{2}(\mu_\alpha+\mu_\beta)$, are not entropy approachable} (cf. Lemma~\ref{ent-ap}). 
To complete the proof of Theorem~C,
we exploit the symmetry in the Dyck shift and prove that 
$\frac{1}{2}(\mu_\alpha+\mu_\beta)$ is approachable by ergodic measures supported on a single periodic orbit.


\section{Symbolic dynamics}
 In this section we consider symbolic dynamics of the generalized heterochaos baker maps.
 Using labeled directed graphs introduced
 in Section~\ref{pre}, we prove Theorem~A in Section~\ref{pfthma}.
  \subsection{Graph representations of subshifts}\label{pre}
    A subshift $\Sigma$ on  a finite discrete set $S$ is a shift invariant closed subset of the Cartesian product topological space $S^{\mathbb Z^+}$ or 
    $S^{\mathbb Z}$.
 Let $L(\Sigma)$
 denote the set of finite words of elements of $S$ which appear in some elements of $\Sigma$.
 For convenience, we include the empty word $\emptyset$ in  $L(\Sigma)$
 and let $\emptyset \xi=\xi\emptyset=\xi$ for  $\xi\in L(\Sigma)\setminus\{\emptyset\}$.
  For
   $n\in\mathbb Z^+$, 
    let $L_n(\Sigma)$ denote the set of elements of
    $L(\Sigma)$
    with word length $n$.
   For $\gamma_0\cdots \gamma_{n-1}\in  L_n(\Sigma)$ define 
 \[ [\gamma_{0}\cdots \gamma_{n-1}]=\{\omega=(\omega_i)_i\in\Sigma\colon \omega_{i}=\gamma_i\text{ for  }0\leq i\leq n-1\}.\]

A {\it directed graph} 
consists of a countable set $V$ of {\it vertices} and a countable set $E$ of {\it edges},
together with two
maps $\iota\colon E\to V$,
 $\tau\colon E\to V$
that
assign to each edge $e\in E$
a pair 
$(\iota(e),\tau(e))\in V\times V$ of vertices.
We say $e$ starts at $\iota(e)$ and terminates at $\tau(e)$.
Let $(V,E)$ be a directed graph, $S$ be a finite set and let
$\varphi\colon E\to S$ be a map.
For
$v,v'\in V$ and $\gamma\in S$
we denote $v\xrightarrow{\gamma} v'$
if there exists $e\in E$
such that $\iota(e)=v$, $\tau(e)=v'$
and 
$\varphi(e)=\gamma$.
We call the triplet
$(V,E,\varphi)$
an {\it $S$-labeled directed graph}.
We call $\gamma_0\gamma_1\cdots\in S^{\mathbb Z^+}$ 
an {\it infinite labeled path} if there are infinitely many vertices $v_0,v_1,\ldots$ such that $v_n\xrightarrow{\gamma_n} v_{n+1}$ for $n\ge0$.

Let $\Sigma$ be a one-sided subshift on $S$.
A subset of $\Sigma$ of the form
\[\Theta^\xi=\{\eta\in\Sigma\colon \xi \eta\in\Sigma\},\ \xi\in L(\Sigma)\]
is called a {\it follower set}.
The {\it follower set graph} 
of $\Sigma$ is 
the $S$-labeled directed graph 
whose vertices are follower sets,
such that 
for two vertices
 $\Theta^\xi$, $\Theta^\eta$
and $\gamma\in S$,
$\Theta^\xi\xrightarrow{\gamma} \Theta^\eta$ 
if and only if $\xi\gamma\in L(\Sigma)$ and $\Theta^{\eta}=\Theta^{\xi\gamma}$.
In this graph, $v\xrightarrow{\gamma} v'$  implies that $v'$ equals the shift image of $v\cap[\gamma]$.
Therefore,
 $\Sigma$ coincides with the set of infinite labeled paths in the follower set graph of $\Sigma$.



\begin{prop}
\label{rule-graph}
The set of vertices in the follower set graph
 of $\Sigma_{H\!C}^+$ is
\[\{\Theta^\xi\colon\xi\in L(\{1,\ldots,m\}^
{\mathbb Z^+})\}.\]
The labeled edges in the graph are:
 \[\begin{split}&\Theta^\xi\xrightarrow{i}\Theta^{\xi i},\  \Theta^{\xi i}\xrightarrow{i+m} \Theta^\xi\quad \text{for }\xi\in L (\{1,\ldots,m\}^{\mathbb Z^+}),\ i\in\{1,\ldots,m\};\\ 
    &\Theta^{\emptyset}\xrightarrow{i}\Theta^{\emptyset}\quad\quad\quad\quad\quad\quad\quad \text{for }i\in\{m+1,\ldots, 2m\}.\end{split}\]
\end{prop}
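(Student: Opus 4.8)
The plan is to read the follower set graph of $\Sigma_{H\!C}^+$ straight off the geometry of $f_a$. On each piece $\Omega_i^+$ the map is ``diagonal'': it acts on $x$ by a full branch of $F_a$, onto $(0,1)$, and on $y$ by an affine map of $y$ alone, namely $y\mapsto g_i(y)$ with $g_i(y)=\frac{y+i-1}{m}$ for $i\le m$ and $g_{j+m}(y)=my-(j-1)$ for $i=j+m$. Thus the $x$-coordinate never obstructs a symbol sequence, and admissibility of words is controlled entirely by the behaviour of vertical fibres under the $g_i$. That behaviour is exactly that of a pushdown stack: $g_i$ prepends the base-$m$ digit $i-1$, while $g_{j+m}$ restricted to $(\frac{j-1}{m},\frac{j}{m})$ is the inverse of $g_j$ on $(0,1)$, i.e.\ it deletes a leading digit $j-1$.

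First I would reduce membership in $L(\Sigma_{H\!C}^+)$ to a geometric condition. For a word $w=\gamma_0\cdots\gamma_{n-1}$ over $\{1,\dots,2m\}$ put $\square(w)=\bigcap_{k=0}^{n-1}f_a^{-k}(\mathrm{int}\,\Omega_{\gamma_k}^+)$; by induction on $n$ this is an open product set $I^x(w)\times I^y(w)$ of intervals. The claim is that $w\in L(\Sigma_{H\!C}^+)$ if and only if $\square(w)\neq\emptyset$. For the nontrivial direction, $[0,1]^2\setminus X_a=\bigcup_{n\ge0}f_a^{-n}([0,1]^2\setminus\bigcup_i\mathrm{int}\,\Omega_i^+)$ is a countable union of line segments (preimages of a fixed finite grid under the piecewise affine map $f_a$), hence meager, so the nonempty open set $\square(w)$ meets $X_a$ and the $\pi_a$-code of any such point begins with $w$; the converse uses that cylinders are clopen in $\Sigma_{H\!C}^+={\rm cl}(\pi_a(X_a))$. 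I emphasize that this is where it matters that $X_a$ and $\pi_a$ are built from interiors: with closed domains, orbits on the grid would make extra words admissible and the language would no longer be the Dyck one.

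Next I would compute, for $\xi\in L(\Sigma_{H\!C}^+)$, the set $U(\xi):=f_a^{|\xi|}(\square(\xi))$. Since $f_a^{|\xi|}$ restricted to $\square(\xi)$ is a composition of diagonal affine homeomorphisms it is a diagonal affine homeomorphism of $\square(\xi)$ onto $U(\xi)$, and since every branch of $F_a$ is onto $(0,1)$ the $x$-factor is all of $(0,1)$: $U(\xi)=(0,1)\times V(\xi)$ for an interval $V(\xi)$. The key lemma, proved by induction on $|\xi|$ via the recursion $V(\xi\gamma)=g_\gamma(V(\xi)\cap C_\gamma)$ (with $C_\gamma$ the $y$-projection of $\mathrm{int}\,\Omega_\gamma^+$), is that $V(\xi)$ depends only on the \emph{stack} $\mathrm{stk}(\xi)$ of $\xi$ --- the word over $\{1,\dots,m\}$ obtained by scanning $\xi$ left to right, pushing on a symbol $\le m$, popping on a symbol $j+m$ whose current stack top is $j$, and leaving the empty stack unchanged on a symbol $j+m$ with empty stack --- and that, writing $\mathrm{stk}(\xi)=s_1\cdots s_l$, the interval $V(\xi)$ is the open base-$m$ interval whose first $l$ digits are $s_l-1,\dots,s_1-1$ (and $V(\xi)=(0,1)$ when $l=0$). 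The four cases of the induction are: a push prepends a digit; a matching pop deletes the leading digit and loses nothing, since $V(\xi)$ already lies inside $(\frac{s_l-1}{m},\frac{s_l}{m})$; a right bracket at the empty stack returns $(0,1)$; a mismatched pop yields $\emptyset$ because the two half-open digit intervals involved are disjoint. This also identifies $L(\Sigma_{H\!C}^+)$, since $\xi\gamma\in L(\Sigma_{H\!C}^+)$ precisely in the first three cases.

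It remains to assemble the graph. Because $f_a^{|\xi|}$ is a bijection from $\square(\xi)$ onto $U(\xi)$, one has $\xi w'\in L(\Sigma_{H\!C}^+)$ iff $U(\xi)\cap\square(w')\neq\emptyset$, which (as $I^x(w')\subseteq(0,1)$) holds iff $w'\in L(\Sigma_{H\!C}^+)$ and $V(\xi)\cap I^y(w')\neq\emptyset$. Hence $\Theta^\xi$ is determined by $V(\xi)$, so by $\mathrm{stk}(\xi)$, whence $\Theta^\xi=\Theta^{s_1\cdots s_l}$ with $s_1\cdots s_l\in L(\{1,\dots,m\}^{\mathbb Z^+})$, which gives the stated vertex set; distinctness of these vertices is routine, since distinct stacks give distinct $V(\cdot)$, already separated by the intervals $I^y(w')$ for $w'$ over $\{m+1,\dots,2m\}$, which run through all base-$m$ digit intervals. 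The edge list is then the case analysis above: from any $\Theta^\xi$ every push $i\le m$ is admissible and leads to $\Theta^{\xi i}$, giving $\Theta^\xi\xrightarrow{i}\Theta^{\xi i}$; from a vertex $\Theta^{\xi i}$ with nonempty stack the only admissible right bracket is the matching one $i+m$, giving $\Theta^{\xi i}\xrightarrow{i+m}\Theta^\xi$; and from $\Theta^{\emptyset}$ every right bracket $i\in\{m+1,\dots,2m\}$ is admissible and loops back, giving $\Theta^{\emptyset}\xrightarrow{i}\Theta^{\emptyset}$. The main obstacle I expect is the bookkeeping in the inductive computation of $V(\xi)$ --- tracking the half-open endpoints and verifying that matching pops genuinely lose nothing --- the one place where the interior-based definitions are essential; the reduction $w\in L(\Sigma_{H\!C}^+)\iff\square(w)\neq\emptyset$ and the bijectivity of $f_a^{|\xi|}$ on $\square(\xi)$ are then routine.
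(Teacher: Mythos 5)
Your argument is correct and fills in, with full rigor, what the paper's two-line proof only gestures at. The paper records two identities (essentially $\Theta^{\xi i(i+m)}=\Theta^{\xi}$ for $\xi$ over $\{1,\dots,m\}$ and $\Theta^{i}=\Theta^{\emptyset}$ for $i>m$) and declares that these imply the proposition; you make the deduction explicit. In particular you supply the three ingredients the paper's sketch leaves implicit: the meagerness argument identifying $L(\Sigma_{H\!C}^+)$ with $\{w:\square(w)\neq\emptyset\}$ (which is where the density of $X_a$ and the interior-based definition of the coding earn their keep); the inductive computation that the $y$-fiber $V(\xi)$ is exactly the open base-$m$ interval coded by the stack $\mathrm{stk}(\xi)$, so that pushes prepend a digit, matching pops delete one, right brackets at empty stack are free, and mismatched pops kill the fiber; and the observation that the admissible right-bracket continuations already separate distinct stacks, so the vertices $\Theta^{\xi}$, $\xi\in L(\{1,\dots,m\}^{\mathbb Z^+})$, are pairwise distinct. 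The underlying idea is the same as the paper's (the central fiber acts as a pushdown stack, and the follower set depends only on it), so this is not a different route, just a complete one; the one step you would want to write out with care in a final draft is the base case and the bookkeeping of open versus half-open endpoints in the induction for $V(\xi)$, which you have already correctly flagged as the crux.
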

\begin{proof}
By the definitions of $f_a$ and
$\Sigma_{H\!C}^+$,
for $n\geq1$,
 $\xi\in\{1,\ldots,m\}^n$ and $i\in\{1,\ldots,m\}$ we have
$\Theta^{\xi i(i+m)}=\sigma(\sigma(\sigma^n[\xi]\cap [i])\cap [i+m])$,
and $\Theta^i=\Theta^\emptyset$ for $i\in\{m+1,\ldots,2m\}$.
These equalities imply the assertions of the proposition.
\end{proof}

\subsection{Proof of Theorem~A}\label{pfthma}
By the relations in Section~\ref{Dyck}
we have $\Theta^{\xi\alpha_i\beta_i }=\Theta^\xi$
for $\xi\in L (\{\alpha_1,\ldots,\alpha_m\}^{\mathbb Z^+ })$ and $i\in\{1,\ldots,m\}$,
and
$\Theta^{\beta_i}=\Theta^\emptyset$
for $i\in\{1,\ldots,m\}$.
The set of vertices in the follower set graph
 of $\Sigma_{D}^+$ is
$\{\Theta^\xi\colon \xi\in L(\{\alpha_1,\ldots,\alpha_m\}^{
\mathbb Z^+ })\}.$
The labeled edges are:
$\Theta^\xi\xrightarrow{\alpha_i}\Theta^{\xi\alpha_i}$, $\Theta^{\xi\alpha_i}\xrightarrow{\beta_i} \Theta^{\xi}$  for $\xi\in L (\{\alpha_1,\ldots,\alpha_m\}^{\mathbb Z^+ }),\ i\in\{1,\ldots,m\}$; $\Theta^{\emptyset}\xrightarrow{\beta_i}\Theta^{\emptyset}$ for $i\in\{1,\ldots, m\}.$
From this and Proposition~\ref{rule-graph},
the follower set graph of $\Sigma_{H\!C}^+$ coincides with that of $\Sigma_{D}^+$ up to the replacement of symbols
$\kappa\colon\{1,\ldots,2m\}\to D$,
 $\kappa(i)=\alpha_i$, $\kappa(i+m)=\beta_i$ for $i\in\{1,\ldots, m\}$.
Hence
$\Sigma_{H\!C}^+\simeq\Sigma_{D}^+$ holds.
From this and Proposition~\ref{natural-lem2} below we obtain
$\Sigma_{H\!C}\simeq\Sigma_{D}$.\qed

\begin{prop}\label{natural-lem2}
$\Sigma_{H\!C}$ is the natural extension of $\Sigma_{H\!C}^+$.
\end{prop}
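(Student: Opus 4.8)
The plan is to pin down the natural extension of $\Sigma_{H\!C}^+$ explicitly and then reduce the proposition to a comparison of languages. For a one-sided subshift $\Sigma^+$ on a finite set, the inverse limit of $(\Sigma^+,\sigma^+)$ is realized, via $\omega\leftrightarrow\bigl((\omega_n)_{n\ge -k}\bigr)_{k\ge 0}$, as the two-sided subshift $\widehat{\Sigma^+}=\{\omega\colon(\omega_n)_{n\ge j}\in\Sigma^+\text{ for all }j\in\mathbb Z\}$; since $\Sigma^+$ is closed and shift invariant this equals $\{\omega\colon L(\omega)\subseteq L(\Sigma^+)\}$, where $L(\omega)$ is the set of finite subwords of $\omega$ (for $\supseteq$, approximate $(\omega_n)_{n\ge j}$ by shifts of elements of $\Sigma^+$ realizing its prefixes and use compactness). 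Since a two-sided subshift $\Sigma$ always satisfies $\Sigma=\{\omega\colon L(\omega)\subseteq L(\Sigma)\}$, it is enough to prove the single equality $L(\Sigma_{H\!C})=L(\Sigma_{H\!C}^+)$: this gives $\Sigma_{H\!C}=\{\omega\colon L(\omega)\subseteq L(\Sigma_{H\!C})\}=\{\omega\colon L(\omega)\subseteq L(\Sigma_{H\!C}^+)\}=\widehat{\Sigma_{H\!C}^+}$.

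For the inclusion $L(\Sigma_{H\!C})\subseteq L(\Sigma_{H\!C}^+)$ I would use the projection $P\colon[0,1]^3\to[0,1]^2$, $P(x,y,z)=(x,y)$. Since $\Omega_i=\Omega_i^+\times[0,1]$ one has $P(\mathrm{int}(\Omega_i))\subseteq\mathrm{int}(\Omega_i^+)$, and by construction $P\circ f_{a,b}=f_a\circ P$; hence $P(X_{a,b})\subseteq X_a$, and for $p\in X_{a,b}$ the code $\pi_a(P(p))$ is exactly the non-negative part of $\pi_{a,b}(p)$. Using full invariance of $X_{a,b}$ under $f_{a,b}$ to bring a prescribed occurrence to positions $0,\dots,N$, together with closedness of $\Sigma_{H\!C}^+$, every finite word occurring in $\Sigma_{H\!C}=\mathrm{cl}(\pi_{a,b}(X_{a,b}))$ occurs in $\Sigma_{H\!C}^+$.

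The substantive inclusion is $L(\Sigma_{H\!C}^+)\subseteq L(\Sigma_{H\!C})$. Given $w=w_0\cdots w_N\in L(\Sigma_{H\!C}^+)$, I would first choose $(x,y)$ with $f_a^j(x,y)\in\mathrm{int}(\Omega_{w_j}^+)$ for $0\le j\le N$; this is possible since $w$ occurs in some element of $\mathrm{cl}(\pi_a(X_a))$, hence (cylinders being open) in some element of $\pi_a(X_a)$, and one may take the occurrence to be at positions $0,\dots,N$. For any $z\in(0,1)$ the point $p=(x,y,z)$ then satisfies $f_{a,b}^j(p)\in\mathrm{int}(\Omega_{w_j})$ for $0\le j\le N$: indeed the $xy$-coordinates of $f_{a,b}^j(p)$ equal $f_a^j(x,y)$ because $P\circ f_{a,b}=f_a\circ P$, while each affine map $z\mapsto(1-mb)z$ and $z\mapsto bz+1-mb+b(i-m-1)$ sends $(0,1)$ into $(0,1)$, so the $z$-coordinate never reaches $0$ or $1$. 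Hence $U:=\mathrm{int}(\Omega_{w_0})\cap f_{a,b}^{-1}(\mathrm{int}(\Omega_{w_1}))\cap\cdots\cap f_{a,b}^{-N}(\mathrm{int}(\Omega_{w_N}))$ is a nonempty open subset of $[0,1]^3$, open because $f_{a,b}$ restricted to each $\mathrm{int}(\Omega_i)$ is an affine homeomorphism onto an open box, so iterated preimages of open sets stay open. Since $X_{a,b}$ has full Lebesgue measure — its complement being a countable union of Lebesgue-null sets, as $f_{a,b}^{\pm1}$ are piecewise affine and $\bigcup_i\partial\Omega_i$ is null — it is dense, so some $p'\in U\cap X_{a,b}$ exists; then $\pi_{a,b}(p')$ is defined with $(\pi_{a,b}(p'))_j=w_j$ for $0\le j\le N$, so $w\in L(\pi_{a,b}(X_{a,b}))\subseteq L(\Sigma_{H\!C})$.

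The step I expect to be the main obstacle is exactly this last one: meeting a prescribed finite forward code by a point whose \emph{entire} two-sided orbit avoids the domain boundaries, which I have resolved through density (full Lebesgue measure) of $X_{a,b}$. Should one prefer to avoid Lebesgue measure, the symmetry $f_{a,b}^{-1}(x,y,z)=f_{b,a}(-z+1,y,x)$ reduces control of the backward $f_{a,b}$-orbit inside $U$ to control of a forward $f_{b,a}$-orbit, handled by the same expansion-in-$x$ mechanism used for $X_a$. The remaining issues — the precise meaning of $\mathrm{int}$, and the discontinuity of $f_a$ and $f_{a,b}$ across the $\partial\Omega_i$ — are harmless provided one always works inside the open set $\bigcup_i\mathrm{int}(\Omega_i)$, on which the maps are local homeomorphisms onto open sets.
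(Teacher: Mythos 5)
Your proof is correct and takes a genuinely different route from the paper's. The paper's proof is very terse: it asserts the pre-closure identity $\pi_{a,b}(X_{a,b})=\{\omega\colon(\omega_n)_{n\geq -k}\in\pi_a(X_a)\text{ for all }k\geq1\}$ as ``easy to see,'' passes to closures to get $\Sigma_{H\!C}\subseteq\widehat{\Sigma_{H\!C}^+}$, and declares the opposite inclusion ``can easily be checked.'' You instead reduce the proposition to the single clean equality $L(\Sigma_{H\!C})=L(\Sigma_{H\!C}^+)$, using the standard fact that a two-sided subshift is determined by its language and that the natural extension of a one-sided subshift is the two-sided subshift with the same language; you then prove the two language inclusions separately — the ``$\subseteq$'' direction by the factor map $P(x,y,z)=(x,y)$ satisfying $P\circ f_{a,b}=f_a\circ P$, and the ``$\supseteq$'' direction by lifting a 2D witness to $[0,1]^3$ with an arbitrary $z\in(0,1)$, checking the $z$-fiber stays in $(0,1)$, and invoking density of $X_{a,b}$ (it has full Lebesgue measure since its complement is a countable union of null preimages of the box boundaries). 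What your approach buys is that it makes the crucial step explicit — the paper's ``easily checked'' reverse inclusion is precisely the construction of a point of $X_{a,b}$ matching a prescribed finite code, which you isolate and resolve via measure-theoretic density (and you also note the cleaner alternative using the symmetry $f_{a,b}^{-1}(x,y,z)=f_{b,a}(-z+1,y,x)$). Your observation that openness of the cylinder-type set $U$ needs care because $f_{a,b}$ is discontinuous, but is salvaged by working inside $\bigcup_i\mathrm{int}(\Omega_i)$ where the map is an affine local homeomorphism, is exactly the right point to flag.
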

\begin{proof}
Fix $a,b\in(0,c_0)$.
It is easy to see that
 $\pi_{a,b}(X_{a,b})$ is the natural extension of $\pi_a(X_a)$ in the sense that \[\pi_{a,b}(X_{a,b} )=\{
  \omega\in \{1,\ldots,2m\}^{\mathbb Z}\colon (\omega_n)_{n\geq-k}\in \pi_a(X_a)\text{ for }k\geq1\}.\]
Taking closures of both sides of the equality we obtain
\[\Sigma_{H\!C}\subset\{\omega\in \{1,\ldots,2m\}^{\mathbb Z}\colon (\omega_n)_{n\geq-k}\in \Sigma_{H\!C}^+\text{ for }k\geq1\}.\]
The opposite inclusion can easily be checked.
\end{proof}

\section{Invariant measures}
In light of Theorem~A,  we also denote by 
$\sigma$ the left shift
on $\Sigma_D$.
In Section~\ref{Dyck-review} we review the structure of the Dyck shift and its two ergodic mmes from \cite{Kri74}.
In Section~\ref{range}  we identify the set of points which are out of the image of the coding map.
In Section~\ref{lyapunov-sec} we introduce Lyapunov exponents, and provide in Section~\ref{entropy} upper bounds on entropy of an ergodic measure of $f_{a,b}$ in terms of its Lyapunov exponents. In Section~\ref{per-sec} we show that the half and half combination of the ergodic mmes of the Dyck shift is approachable by ergodic measures supported on a periodic orbit. The proof of Theorem~B and that of Theorem~C are given in Sections~\ref{pfthmb} and \ref{pfthmc} respectively.

\subsection{Representation of the ergodic mmes of the Dyck shift}\label{Dyck-review}
Let $\alpha$, $\beta$ be new symbols
and define
 \[\Sigma_\alpha=\{\alpha_1,\ldots,\alpha_{m},\beta\}^{\mathbb Z}\quad\text{and}\quad\Sigma_\beta=\{\alpha,\beta_1,\ldots,\beta_m\}^{\mathbb Z}.\]
 Let $\lambda_\alpha$, $\lambda_\beta$ denote the Bernoulli measures on 
$\Sigma_{\alpha}$,
$\Sigma_{\beta}$
respectively. 
As shown in \cite[Section~4]{Kri74},
the two ergodic mmes for $\Sigma_D$ are isomorphic to 
$\lambda_\alpha$ or $\lambda_\beta$. For our purpose we make explicit the isormorphisms, with a slightly different notation from the one in \cite[Section~4]{Kri74}.

 For $i\in\mathbb Z$ define $H_i\colon \Sigma_D\to\mathbb Z$ by
      \[H_i(\omega)=\begin{cases}\sum_{j=0}^{i-1} \sum_{l=1}^{m}(\delta_{{\alpha_l},\omega_j}-\delta_{\beta_l,\omega_j})&\text{ for }  i\geq1,\\\sum_{j=i}^{-1} \sum_{l=1}^{m}(\delta_{{\beta_l},\omega_j}-\delta_{\alpha_l,\omega_j})&\text{ for } i\leq -1,\\
      0&\text{ for }i=0.\end{cases}\]
      These functions can be used to express whether a bracket in a prescribed position is closed or not.
       We will often use the following property: let $\omega\in\Sigma_D$ and let $i$, $j\in\mathbb Z$ satisfy $i<j$. Then
       $H_{i}(\omega)=H_j(\omega)$ if and only if ${\rm red}(\omega_{i}\cdots \omega_{j-1})=1$.
For simplicity we write as follows: $\{H_i=H_j\}=\{\omega\in\Sigma_D\colon H_{i}(\omega)=H_j(\omega)\}$;
$\Sigma_D(i,\gamma)=\{\omega\in\Sigma_D\colon\omega_i=\gamma\}$ for $\gamma\in D$;
$\Sigma_\alpha(i,\gamma)=\{\omega\in\Sigma_\alpha\colon\omega_i=\gamma\}$
for $\gamma\in\{\alpha_1,\ldots,\alpha_m,\beta\}$;
$\Sigma_\beta(i,\gamma)=\{\omega\in\Sigma_D\colon\omega_i=\gamma\}$ for  $\gamma\in\{\alpha,\beta_1,\ldots,\beta_m\}$.

    We introduce three shift invariant Borel sets
    \[\begin{split}A_0&=\bigcap_{i=-\infty}^\infty\left(\left(\bigcup_{l=1}^\infty\{ H_{i+l}=H_i\}\right)\cap\left(\bigcup_{l=1}^\infty\{ H_{i-l}=H_i\}\right)\right),\\
A_\alpha&=\left\{\omega\in\Sigma_D\colon
\lim_{i\to\infty}H_i(\omega)
=\infty\text{ and }\lim_{i\to-\infty}H_i(\omega)=-\infty\right\},\\
A_\beta&=\left\{\omega\in\Sigma_D\colon
\lim_{i\to\infty}H_i(\omega)=-\infty\text{ and }
\lim_{i\to-\infty}H_i(\omega)=\infty\right\}.\end{split}\]
Ergodic measures in $\mathcal M(\sigma)$ give measure $1$ to one of the three sets \cite[pp.102--103]{Kri74}.

Recall that $\bar\pi=\iota\circ\pi$ where $\iota\colon\Sigma_{H\!C}\to\Sigma_D$ is the isomorphism given by Theorem~A.
In order to know which measures in $\mathcal M(\sigma)$ can be transferred to measures in $\mathcal M(f)$ via the map $\bar\pi$, we introduce
a shift invariant Borel set
\[A=\left\{\omega\in\Sigma_D\colon \liminf_{i\to\infty}H_i(\omega)=-\infty\text{ or }\liminf_{i\to-\infty}H_i(\omega)=-\infty\right\}.\]
Note that $A_\alpha\cup A_\beta\subset A$.
\begin{lemma}\label{m-bij}
The restriction of 
$\bar\pi$ to $\bar\pi^{-1}(A)$ is a homeomorphism onto its image.
\end{lemma}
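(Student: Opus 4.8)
The plan is to show that $\bar\pi$ restricted to $\bar\pi^{-1}(A)$ is injective; since $\bar\pi$ is continuous and $\bar\pi^{-1}(A)$ is a subset of the compact metric space $X=X_{a,b}$, and $A$ is Borel, a continuous injection from $\bar\pi^{-1}(A)$ onto its image is automatically a homeomorphism onto its image once we know injectivity (a continuous bijection between metrizable spaces need not be a homeomorphism in general, but here we can argue directly that the inverse is continuous, or restrict further to compact pieces; I expect the cleanest route is to exhibit an explicit continuous inverse on the image). So the real content is: two points $p,q\in X$ with $\bar\pi(p)=\bar\pi(q)=\omega\in A$ must coincide.

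First I would recall the geometric meaning of the coding. A point $p=(x,y,z)\in X$ is determined by its itinerary $(\omega_n)_{n\in\mathbb Z}$ through the following mechanism: the $x$-coordinate is expanding (hence determined by the forward itinerary alone, as in the usual baker map), and the $z$-coordinate is contracting (hence determined by the backward itinerary alone). The subtle coordinate is $y$: under $f_{a,b}$ the $y$-direction contracts by $1/m$ on $\Omega_1,\dots,\Omega_m$ (the ``$\alpha$'' symbols) and expands by $m$ on $\Omega_{m+1},\dots,\Omega_{2m}$ (the ``$\beta$'' symbols). So along a fixed itinerary $\omega$, the $y$-coordinate is pinned down on those time windows $[i,j]$ for which the net contraction/expansion of the central direction over $\{f^i(p),\dots,f^{j}(p)\}$ is favorable; precisely, reading the itinerary of $\omega$ under $\kappa$ as Dyck brackets, the function $H_i(\omega)$ records the central expansion exponent (number of $\alpha$'s minus number of $\beta$'s between times $0$ and $i$), and the condition for $y$ to be uniquely recoverable is exactly that $\liminf_{i\to\infty}H_i = -\infty$ \emph{or} $\liminf_{i\to -\infty}H_i=-\infty$, i.e.\ that $\omega\in A$. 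Indeed, if $\liminf_{i\to\infty}H_i(\omega)=-\infty$, then for arbitrarily large $i$ the central direction has contracted by an arbitrarily large factor $m^{|{\rm (min)}|}$ over $\{p,\dots,f^i(p)\}$, which forces $y$ to lie in an interval of length $\to 0$; hence $y$ is determined by $\omega$. Symmetrically if $\liminf_{i\to -\infty}H_i(\omega)=-\infty$, the central direction of $f^{-1}$ contracts $y$ backward in time, again pinning $y$. This is where the hypothesis $\omega\in A$ is used in an essential way, and it is the heart of the argument.

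Concretely I would argue as follows. Let $\bar\pi(p)=\bar\pi(q)=\omega$. Writing $\iota^{-1}$ through $\kappa^{-1}$, the common symbol sequence $(s_n)_{n\in\mathbb Z}\in\{1,\dots,2m\}^{\mathbb Z}$ tells us that $f^n(p),f^n(q)\in\Omega_{s_n}$ for all $n$. From the $x$-expansion (factor $1/a$ or $1/(1-ma)$) the standard telescoping argument gives that the $x$-coordinates of $p$ and $q$ agree; from the $z$-contraction (factor $1-mb$ or $b$) applied to $f^{-n}$ the $z$-coordinates agree. For the $y$-coordinate, consider the case $\liminf_{i\to\infty}H_i(\omega)=-\infty$ (the other case is symmetric, using $f^{-1}$). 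Track how the $y$-interval containing $f^n(p)$ and $f^n(q)$ evolves: after the first $n$ steps, the length of the $y$-fiber consistent with $s_0\cdots s_{n-1}$ is $m^{-(\#\alpha\text{'s})+(\#\beta\text{'s among }s_0\dots s_{n-1})}=m^{-H_n(\omega)}$ — wait, more carefully, the $y$-coordinate of $p$ is obtained by pulling back: $y(p)\in$ an interval of length $m^{H_n(\omega)}$... I would verify the correct sign by direct computation on the piecewise-affine formula, but the upshot is that one of the two directions of time gives a nested sequence of $y$-intervals (around both $y(p)$ and $y(q)$) whose lengths tend to $0$ along the subsequence realizing the $\liminf$, forcing $y(p)=y(q)$. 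Therefore $p=q$.

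\textbf{Main obstacle.} The routine parts (agreement of $x$ and $z$) are exactly as for the baker map and present no difficulty. The main obstacle is bookkeeping the central ($y$) direction correctly: getting the sign of the exponent right so that ``$\liminf H_i=-\infty$'' corresponds to ``forward-time contraction of $y$'' rather than expansion, and handling the asymmetry between the two clauses of $A$ (forward vs.\ backward liminf). One must also be slightly careful that the relevant $y$-intervals are genuinely nested and genuinely shrink — this uses that at each $\beta$-step the $y$-fiber is subdivided into $m$ pieces and at each $\alpha$-step it is stretched, so the length after $n$ steps is $m$ to the power of (net number of $\beta$'s minus $\alpha$'s) $=-H_n(\omega)$ up to a possible sign convention, and one passes to the subsequence where this exponent $\to -\infty$. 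Finally, to upgrade injectivity to ``homeomorphism onto its image'' I would note that $\bar\pi$ is a continuous map and that on the image the inverse is given by the explicit series expansions for $x$, $y$, $z$ in terms of the itinerary (these converge and depend continuously on $\omega$ precisely on $\bar\pi(\bar\pi^{-1}(A))$), which yields continuity of the inverse directly.
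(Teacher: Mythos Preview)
Your approach is the paper's: it observes that if $\bar\pi^{-1}(\omega)$ is not a singleton then (since $x$ and $z$ are determined as you say) it contains a $y$-parallel segment $l$ with ${\rm diam}(f^i(l))=m^{-H_i(\omega)}{\rm diam}(l)$, and $\omega\in A$ forces this diameter to be unbounded in one time direction, contradicting $f^i(l)\subset[0,1]^3$; the paper then simply asserts continuity of the inverse. On the sign you flagged: $\liminf_{i\to\infty}H_i=-\infty$ means net forward \emph{expansion} of $y$ (since $\alpha$-steps contract $y$ by $1/m$ and $\beta$-steps expand it by $m$), so your ``central direction has contracted'' language is backwards---but your pullback formulation, that the $y$-cylinder has length at most $m^{H_n(\omega)}\to 0$ along the subsequence realizing the $\liminf$, is correct and is exactly the contrapositive of the paper's blow-up argument.
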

\begin{proof}
Let $\omega\in A$
satisfy
 $\bar\pi^{-1}(\omega)\neq\emptyset$.
 Suppose $\bar\pi^{-1}(\omega)$ is not a singleton.
 Then it contains a segment $l$ parallel to the $y$-axis such that
 $ f^i$ is affine on $l$
 for all $i\in\mathbb Z$.
 We have
${\rm diam}(f^i(l))= m^{-H_i(\omega)}{\rm diam}(l)$,
where ${\rm diam}$ denotes the Euclidean diameter.
If $\liminf_{i\to\infty}H_i(\omega)=-\infty$ 
then $\limsup_{i\to\infty}{\rm diam}(f^i(l))=\infty$.
If
$\liminf_{i\to-\infty}H_i(\omega)=-\infty$ then 
$\limsup_{i\to-\infty}{\rm diam}(f^i(l))=\infty$.
 In either of the cases we reach a contradiction.
Therefore,
 the restriction of $\bar\pi$ to $\bar\pi^{-1}(A)$ is injective.
 It is a homeomorphism since the inverse map is continuous.
\end{proof}

The two ergodic mmes of the Dyck shift give measure $1$ to sets smaller than $A_\alpha$ and $A_\beta$ respectively.
We introduce two shift invariant Borel sets
\[\begin{split}B_\alpha&=\bigcap_{i=-\infty}^\infty\bigcup_{l=1}^m\left(\Sigma_D(i,\alpha_l)\cup\bigcup_{k=1}^\infty\Sigma_D(i,\beta_l)\cap\{ H_{i-k+1}=H_{i+1}\}\right),\\
B_\beta&=\bigcap_{i=-\infty}^\infty\bigcup_{l=1}^m\left(\Sigma_D(i,\beta_l)\cup\bigcup_{k=1}^\infty
\Sigma_D(i,\alpha_l)\cap \{H_{i+k}=H_i\}\right).\end{split}\]
The set $B_\alpha\setminus\{\alpha_1,\ldots,\alpha_m\}^\mathbb Z$ (resp. $B_\beta\setminus\{\beta_1,\ldots,\beta_m\}^\mathbb Z$) consists of the set of sequences in $\Sigma_D$ such that any right (resp. left) bracket in the sequence is closed. 
Define $\phi_\alpha\colon B_\alpha\to \Sigma_\alpha$ by
\[(\phi_\alpha(\omega))_i=\begin{cases}
  \alpha_j&\text{ if }\omega_i=\alpha_j,\ j\in\{1,\ldots,m\},\\
  \beta&\text{ if }\omega_i\in\{\beta_1,\ldots,\beta_m\}.
\end{cases}\]
In other words, $\phi_\alpha(\omega)$ is obtained by replacing all $\beta_j$, $j\in\{1,\ldots,m\}$ in $\omega$ by $\beta$.
Similarly, define $\phi_\beta\colon B_\beta\to \Sigma_\beta$ by
\[(\phi_\beta(\omega))_i=\begin{cases}
    \alpha&\text{ if }\omega_i\in\{\alpha_1,\ldots,\alpha_m\},\\
    \beta_j&\text{ if }\omega_i=\beta_j,\ j\in\{1,\ldots,m\}.
\end{cases}\]
Clearly, $\phi_\alpha$, $\phi_\beta$ are continuous, injective and commute with the left shifts.
The next lemma summarizes what was proved in
\cite[Section~4]{Kri74}. 

\begin{lemma}\label{null}
For each $\gamma\in\{\alpha,\beta\}$ the following hold:
\begin{itemize}
\item[(a)]
$\lambda_\gamma(
\phi_\gamma({B}_\gamma))=1$.
\item[(b)] the map 
 $\psi_\gamma\colon \phi_\gamma(B_\gamma)\to B_\gamma$ satisfying
$\psi_{\gamma}\circ\phi_\gamma=1_{B_\gamma}$ is continuous.
\end{itemize}
\end{lemma}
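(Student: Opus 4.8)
The plan is to recover both statements essentially from the combinatorics encoded in the sets $B_\alpha$ and $\phi_\alpha(B_\alpha)$, treating $\gamma = \alpha$ (the case $\gamma = \beta$ being symmetric under the flip that exchanges left and right brackets). For part (a), the key observation is that the complement of $\phi_\alpha(B_\alpha)$ inside $\Sigma_\alpha$ is precisely the set of sequences in $\{\alpha_1,\dots,\alpha_m,\beta\}^{\mathbb Z}$ for which some occurrence of the symbol $\beta$ cannot be ``legally matched'' to a preceding unmatched $\alpha$-symbol — that is, where the height function associated to $\Sigma_\alpha$ (counting $+1$ for each $\alpha_l$ and $-1$ for each $\beta$) fails to stay nonnegative relative to a base point when read backward from that $\beta$. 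First I would make this precise: a point $\eta\in\Sigma_\alpha$ lies in $\phi_\alpha(B_\alpha)$ iff for every $i$ with $\eta_i=\beta$ there is $k\ge 1$ with $\sum_{j=i-k+1}^{i-1}(\text{count of }\alpha\text{ minus count of }\beta \text{ in }\eta_j)=0$ while the intermediate partial sums stay positive — i.e.\ the $\beta$ at position $i$ is closed. Under the Bernoulli measure $\lambda_\alpha$, this is a standard random-walk recurrence statement: because $\lambda_\alpha$ assigns total mass $\frac{m}{m+1}$ to the $\alpha$-symbols and $\frac{1}{m+1}$ to $\beta$, the walk has strictly positive drift $\frac{m-1}{m+1}>0$ when read in the forward direction, hence by the strong law of large numbers $H_i\to\infty$ and $H_i\to-\infty$ as $i\to\pm\infty$ $\lambda_\alpha$-a.s., and every step down is a.s.\ eventually compensated. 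So $\lambda_\alpha$-almost every $\eta$ has all its $\beta$'s closed, giving $\lambda_\alpha(\phi_\alpha(B_\alpha))=1$. (When $m=2$ the drift argument still applies since $\frac{m-1}{m+1}=\frac13>0$; the degenerate zero-drift case would be $m=1$, excluded.)

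For part (b), the map $\psi_\alpha$ is the inverse of the bijection $\phi_\alpha\colon B_\alpha\to\phi_\alpha(B_\alpha)$ from Lemma~\ref{null}'s setup (recall $\phi_\alpha$ is injective on $B_\alpha$), so what must be shown is that $\psi_\alpha$ is continuous — equivalently, that each coordinate $(\psi_\alpha(\eta))_i$ depends continuously (hence, by compactness and total disconnectedness, in a locally constant manner) on $\eta\in\phi_\alpha(B_\alpha)$. I would argue as follows: if $\eta_i\in\{\alpha_1,\dots,\alpha_m\}$ then trivially $(\psi_\alpha(\eta))_i=\eta_i$, depending on a single coordinate. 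If $\eta_i=\beta$, then the corresponding symbol in $B_\alpha$ must be the $\beta_l$ that closes the unique matching $\alpha_l$; since $\eta\in\phi_\alpha(B_\alpha)$, there is a finite $k=k(\eta,i)\ge1$ such that position $i-k$ carries the matching $\alpha$-symbol and the intermediate heights are positive, so $(\psi_\alpha(\eta))_i=\beta_l$ where $\alpha_l$ is read off from $\eta_{i-k}$. The matching index $k$ is determined by a \emph{finite} window of $\eta$ (all nearby coordinates on which the relevant partial sums are computed), hence $(\psi_\alpha(\eta))_i$ is locally constant on $\phi_\alpha(B_\alpha)$, and continuity of $\psi_\alpha$ follows.

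The main obstacle I anticipate is not either half in isolation but pinning down cleanly the claim that $\phi_\alpha(B_\alpha)$ is exactly the ``all right brackets closed'' subset of $\Sigma_\alpha$ and that the matching depth $k(\eta,i)$ is finite and locally constant \emph{on that set} — one has to be a little careful that the definition of $B_\alpha$ via the condition $\{H_{i-k+1}=H_{i+1}\}$ on $\Sigma_D$ transports correctly under $\phi_\alpha$ to a condition on $\Sigma_\alpha$ involving only the coarsened height function, and that no pathological $\eta$ with infinitely deep nesting sneaks into $\phi_\alpha(B_\alpha)$. Once the equivalence ``$\eta\in\phi_\alpha(B_\alpha)$ $\iff$ every $\beta$ in $\eta$ is closed at finite depth'' is established, both (a) and (b) are short: (a) is the random-walk recurrence estimate above, and (b) is the locally-constant-coordinate argument. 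Since this is precisely the content isolated in \cite[Section~4]{Kri74}, I would keep the write-up brief and cite Krieger for the detailed verification, recording here only the translation into the present notation.
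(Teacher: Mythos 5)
Your proposal is correct and follows essentially the same route as the paper: for (a) you invoke the law of large numbers for the Bernoulli measure to conclude that almost every right bracket is eventually matched, and for (b) you observe that the matching depth is finite and locally constant, so each coordinate of $\psi_\gamma$ depends on a finite window. The paper's version of (a) is phrased slightly differently --- it shows directly that an unmatched $\beta$ at position $i$ forces the backward density of $\beta$'s to be at least $\tfrac12$, contradicting Birkhoff's theorem since $\lambda_\alpha(\beta) = \tfrac{1}{m+1} \le \tfrac13$ --- whereas you phrase the same fact as positive drift of the associated $\pm1$ random walk, so $\widehat H_{\alpha,i}\to-\infty$ backward a.s.\ and hence (since the steps are $\pm1$) every level is hit. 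These are the same Birkhoff/SLLN content. Two small cautions about precision: the defect set should be described as ``the backward walk stays \emph{strictly positive} forever,'' not ``fails to stay nonnegative,'' and for the drift argument to force a return you do need to note the unit step size (the walk cannot jump over the target level); and in the characterization of $\phi_\alpha(B_\alpha)$ the requirement that ``intermediate partial sums stay positive'' is not part of the defining condition of $B_\alpha$ (which only asks for \emph{some} $k$), though it is automatically satisfied at the minimal such $k$, so your reformulation is equivalent. With those clarifications the argument matches the paper's.
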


\begin{proof}We reproduce the proof in \cite[Section~4]{Kri74}
for the reader's convenience.
 For $i\in\mathbb Z$ we define $\widehat H_{\alpha,i}\colon \Sigma_\alpha\to\mathbb Z$ and $\widehat H_{\beta,i}\colon \Sigma_\beta\to\mathbb Z$  by
      \[\begin{split}\widehat H_{\alpha,i}(\omega)&=\begin{cases}\sum_{j=0}^{i-1} \sum_{l=1}^{m}(\delta_{{\alpha_l},\omega_j}-\delta_{\beta,\omega_j})&\text{ for }  i\geq1,\\\sum_{j=i}^{-1} \sum_{l=1}^{m}(\delta_{{\beta},\omega_j}-\delta_{\alpha_l,\omega_j})&\text{ for } i\leq -1,\\
      0&\text{ for }i=0,\ \text{and}\end{cases}\\
            \widehat H_{\beta,i}(\omega)&=\begin{cases}\sum_{j=0}^{i-1} \sum_{l=1}^{m}(\delta_{{\alpha},\omega_j}-\delta_{\beta_l,\omega_j})&\text{ for }  i\geq1,\\\sum_{j=i}^{-1} \sum_{l=1}^{m}(\delta_{{\beta_l},\omega_j}-\delta_{\alpha,\omega_j})&\text{ for } i\leq -1,\\
      0&\text{ for }i=0,\end{cases}\end{split}\]
      respectively.
      Clearly we have
\[\begin{split}\phi_\alpha(B_\alpha)&=\bigcap_{i=-\infty}^\infty\bigcup_{l=1}^m\left(
\Sigma_\alpha(i,\alpha_l)\cup\bigcup_{k=1}^\infty
\Sigma_\alpha(i,\beta)\cap\{  \widehat H_{\alpha,i+1-k}=\widehat H_{\alpha,i+1} \}\right),\\
\phi_\beta(B_\beta)&=\bigcap_{i=-\infty}^\infty\bigcup_{l=1}^m\left(
\Sigma_\beta(i,\beta_l)\cup\bigcup_{k=1}^\infty
\Sigma_\beta(i,\alpha)\cap \{\widehat H_{\beta,i+k}=\widehat H_{\beta,i}\}\right).\end{split}\]

Let $\omega\in \Sigma_\alpha\setminus
\phi_\alpha(B_\alpha)$.
Then there exists $i\in\mathbb Z$ such that
$\omega_i=\beta$ and
$\widehat H_{\alpha,i+1-k}(\omega)\neq\widehat H_{\alpha,i+1}(\omega)$ for $k\geq1$.
Then $\omega_{i-1}=\beta$,
for otherwise 
$\widehat H_{\alpha,i-1}(\omega)=\widehat H_{\alpha,i+1}(\omega)$.
If $\omega_{i-2}\neq\beta$ then 
$\omega_{i-3}=\beta$, for otherwise
$\widehat H_{\alpha,i-3}(\omega)=\widehat H_{\alpha,i+1}(\omega)$. Iterating this argument gives
$\liminf_{n\to\infty}\frac{1}{n}\#\{i-n+1\leq j\leq i\colon\omega_j=\beta\}\geq\frac{1}{2}.$
Birkhoff's ergodic theorem for $(\sigma^{-1},\lambda_{\alpha})$ yields
$\lambda_\alpha(\phi_\alpha(B_\alpha))=1$, as required in Lemma~\ref{null}(a).
The same reasoning 
for $(\sigma,\lambda_{\beta})$ yields
$\lambda_\beta(\phi_\beta(B_\beta))=1$.

The map $\psi_\alpha\colon \phi_\alpha(B_\alpha)\to B_\alpha$ in Lemma~\ref{null}(b) is given by
\[(\psi_\alpha(\omega))_i=\begin{cases}
  \alpha_j&\text{ if }\omega_i=\alpha_j,\ j\in\{1,\ldots,m\},\\
  \beta_j&\text{ if }\omega_i=\beta,\ \omega_{r_\alpha(i,\omega)}=\beta_j,\ j\in\{1,\ldots,m\},
\end{cases}\]
where 
$r_\alpha(i,\omega)=\max\{j<i+1\colon \widehat H_{\alpha,j}(\omega)=\widehat H_{\alpha,i+1}(\omega)\}$.
The continuity is obvious.
Similarly, the map $\psi_\beta\colon \phi_\beta(B_\beta)\to B_\beta$ is given by
\[(\psi_\beta(\omega))_i=\begin{cases}
   \alpha_j&\text{ if }\omega_i=\alpha,\ \omega_{r_\beta(i,\omega)}=\alpha_j,\ j\in\{1,\ldots,m\},\\
    \beta_j&\text{ if }\omega_i=\beta_j,\ j\in\{1,\ldots,m\},
\end{cases}\]
where 
$r_\beta(i,\omega)=\min\{j>i\colon \widehat H_{\beta,j}(\omega)=\widehat H_{\beta,i}(\omega)\}$. The continuity is obvious.
\end{proof}
By Lemma~\ref{null},  
the measures 
 \begin{equation}\label{ergmme}\nu_{\alpha}=\lambda_\alpha\circ\phi_\alpha\text{ and } \nu_\beta=\lambda_\beta\circ\phi_\beta\end{equation}
are Bernoulli of entropy $\log(m+1)$,
and charge any non-empty open set.
 Using the relations in Section~\ref{Dyck} one can check that
$A_0\cup A_\alpha\subset B_\alpha$ and
$A_0\cup A_\beta\subset B_\beta$.
Hence, the ergodic mmes for $\Sigma_D$ are exactly $\nu_\alpha$ and $\nu_\beta$.

\subsection{The range of the coding map}\label{range}
Since it is important to know which measures in $\mathcal M(\sigma)$ can be transferred to measures in $\mathcal M(f)$, we need to clarify the range of
 $\bar\pi$.
\begin{lemma}
\label{cset}
We have 
\[\Sigma_D\setminus\bar\pi(X)=
\left(\bigcup_{N=0}^\infty\bigcap_{n=N}^\infty\sigma^{n}\left(\bigcup_{i=1}^m[\alpha_i]\right)\right)\cup\left(\bigcup_{N=0}^\infty\bigcap_{n= N}^\infty\sigma^{-n}\left(\bigcup_{i=1}^{m}[\beta_i]\right)\right).\]
\end{lemma}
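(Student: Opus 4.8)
The plan is to analyze directly how a sequence $\omega\in\Sigma_D$ fails to lie in the image $\bar\pi(X)$, by tracking the $y$-coordinate dynamics encoded by the heights $H_i$. Recall from Lemma~\ref{m-bij} and the computation there that a segment $l$ parallel to the $y$-axis on which all iterates $f^i$ are affine has ${\rm diam}(f^i(l))=m^{-H_i(\omega)}{\rm diam}(l)$; the $y$-direction is contracted exactly when the symbol lies in $\{\alpha_1,\dots,\alpha_m\}$ (i.e. in $\bigcup_{i=1}^m\Omega_i$) and expanded when it lies in $\{\beta_1,\dots,\beta_m\}$. First I would establish the inclusion ``$\supseteq$'': if $\omega$ belongs to $\bigcap_{n\geq N}\sigma^n(\bigcup_{i=1}^m[\alpha_i])$ for some $N$, this says that for every $n\geq N$ the symbol $\omega_{-n}$ (appropriately indexed) is some $\alpha_i$, so reading the orbit backwards one only ever contracts in $y$; the preimage under $f$ of any point, restricted to the relevant cylinder, is forced to lie on a $y$-fibre of positive length whose backward images never expand, hence the $y$-coordinate of any point coding to $\omega$ is not determined — more precisely, such $\omega$ cannot be $\pi_{a,b}$ of a point of $X_{a,b}$, because membership in $X_{a,b}$ requires the intersection $\bigcap_n f^{-n}({\rm int}(\Omega_{\omega_n}))$ to be a single point, and here it contains a nondegenerate $y$-segment. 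The symmetric statement handles the term with $\beta_i$ and $\sigma^{-n}$, using forward images and the symmetry $f_{a,b}^{-1}(x,y,z)=f_{b,a}(-z+1,y,x)$.

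For the reverse inclusion ``$\subseteq$'', suppose $\omega\in\Sigma_D$ is \emph{not} in the right-hand set. Then there are infinitely many $n>0$ with $\omega_{-n}\in\{\beta_1,\dots,\beta_m\}$ and infinitely many $n>0$ with $\omega_n\in\{\alpha_1,\dots,\alpha_m\}$ (this is the negation of both unions of nested intersections). I would use the admissibility condition ${\rm red}(\omega_i\cdots\omega_j)\neq 0$ defining $\Sigma_D$ together with these recurrence properties to show that the nested intersection of cylinders $\bigcap_{n\in\mathbb Z}f_{a,b}^{-n}({\rm int}(\Omega_{\omega_n}))$ is nonempty and reduces to a single point, which then lies in $X_{a,b}$ and codes to $\omega$. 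The $x$- and $z$-coordinates are always pinned down to a single point because $f$ expands in $x$ and contracts in $z$ uniformly (by factors $1/a$ or $1/(1-ma)$, and $1-mb$ or $b$), so the content is entirely about the $y$-coordinate: one must show that the admissible word structure forces the candidate $y$-fibre to shrink to a point. Concretely, a $\beta$-symbol occurring at a negative index, by admissibility, is matched (``closed'') by a corresponding $\alpha$-symbol further back, and reading from that matching position forward contracts $y$ back down; the key estimate is that having $\beta_i$ at infinitely many negative positions forces $\liminf_{i\to-\infty}$ of the relevant height to be finite (not $-\infty$) along a subsequence, which in turn bounds the diameter of the backward images of the $y$-fibre and, combined with a Borel–Cantelli-type counting of contracting steps, makes the fibre degenerate. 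The same argument forward in time handles the $\alpha$-recurrence.

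The main obstacle I anticipate is the quantitative heart of the ``$\subseteq$'' direction: turning ``infinitely many $\beta$'s at negative times'' into an actual proof that the $y$-fibre collapses. The difficulty is that between two consecutive $\beta$-occurrences the $y$-fibre can be expanded and re-contracted many times, and one needs to control the product of these factors — essentially showing that $\inf_i H_{-i}(\omega) > -\infty$ along the orbit, or that the net $y$-diameter tends to $0$, purely from the recurrence of $\beta$-symbols and the bracket-matching constraint. I would handle this by exploiting the explicit matching structure of the Dyck shift: each $\beta_i$ at position $-n$ is ``closed'' by the nearest preceding unmatched $\alpha_i$, and a careful bookkeeping of the $H$-function (as in the proof of Lemma~\ref{null}, using $r_\alpha$, $r_\beta$) shows the $y$-fibre through the point is pinched at the times of these closures. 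A secondary, more routine point is verifying that the right-hand side is genuinely the complement — i.e. that every $\omega$ in that set has empty preimage and no boundary effects are being missed — which follows from the definition of $X_{a,b}$ via interiors of the $\Omega_i$ and the continuity of $\bar\pi$, but should be stated carefully.
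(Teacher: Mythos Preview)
Your plan has a genuine gap in both directions, and both stem from the same misconception: you treat the obstruction to $\omega\in\bar\pi(X)$ as living in the $y$-coordinate, whereas the $x$- and $z$-coordinates are where the action is. For $\supseteq$, take $\omega_{-n}\in\{\alpha_1,\dots,\alpha_m\}$ for all $n\ge N$. You argue this leaves a nondegenerate $y$-segment in the fibre and then assert that ``membership in $X_{a,b}$ requires the intersection $\bigcap_n f^{-n}({\rm int}(\Omega_{\omega_n}))$ to be a single point.'' That assertion is false: $p\in X$ only requires this intersection to be nonempty; a nondegenerate fibre would show non-injectivity of $\bar\pi$, not $\omega\notin\bar\pi(X)$. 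Moreover your $y$-analysis is reversed (forward $\alpha$ contracts $y$, so backward $\alpha$ \emph{expands} $y$, and an all-$\alpha$ past actually determines $y$). The real obstruction is in $z$: each $\alpha$-branch contracts $z$ by the factor $1-mb$, so an all-$\alpha$ past forces the $z$-coordinate of $f^{-N}(p)$ to $0\in\partial\Omega_i$, whence $p\notin X$. This is exactly what the paper's one-line ``there would be some $n$ with $f^n(p)\notin\bigcup_i{\rm int}(\Omega_i)$'' encodes; the symmetric all-$\beta$-future case forces the $x$-coordinate to $1$.

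For $\subseteq$ your premise that ``the $x$- and $z$-coordinates are always pinned down to a single point'' is equally false: if, say, $\omega_n=\alpha_1$ for all $n\ge 0$ then the $x$-cylinders are $I_n=(0,a^n)$ with $\bigcap_n I_n=\emptyset$, so no admissible $x$ exists and the content is \emph{not} entirely about $y$. The paper does not isolate the $y$-coordinate at all. It writes the forward and backward cylinders as products $I_n\times J_n^+\times(0,1)$ and $(0,1)\times J_n^-\times K_n$ of open intervals and uses the elementary observation that a nested sequence of open intervals with lengths tending to $0$ has empty intersection if and only if one endpoint sequence is eventually constant. This reduces $\omega\notin\bar\pi(X)$ to a short case analysis on which endpoint of $I_n$, $J_n^\pm$, or $K_n$ freezes, and each case directly forces a specific eventual forward or backward tail of $\omega$. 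No bracket-matching or height-function bookkeeping is involved.
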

\begin{proof}
Let $\omega=(\omega_n)_{n\in\mathbb Z}\in \Sigma_D\setminus\bar\pi(X)$.
For each $n\geq1$ we write 
$\bigcap_{k=0}^{n-1} f^{-k}({\rm int}(\Omega_{\omega_k}))=I_n\times J_n^+\times (0,1)$
and 
$\bigcap_{k=0}^{1-n} f^{-k}({\rm int}(\Omega_{\omega_k}))=(0,1)\times J_n^-\times K_n,$
where $I_n$, $J_n^+$, $J_n^-$, $K_n$ are open intervals in $(0,1)$.
Since $\omega\notin\bar\pi(X)$ we have 
$\bigcap_{n=1}^\infty(I_n\times (J_n^+\cap J_n^-)\times K_n)=\emptyset$.
So, either $\bigcap_{n=1}^\infty I_n$, $\bigcap_{n=1}^\infty (J_n^+\cap J_n^-)$ or $\bigcap_{n=1}^\infty K_n$ is an empty set.
For $n\geq1$
let $I_n^l$, $I_n^r$
denote the endpoints of $I_n$ with
$I_n^l<I_n^r$.
Similarly we define $J_n^{\pm,l}$, $J_n^{\pm,r}$ and $K_n^l$, $K_n^r$ which are endpoints of $J_n^{+}$, $J_n^{-}$, $K_n$.
We say a real sequence 
$\{a_n\}_{n=1}^\infty$ is {\it e.c.} (eventually constant) if $a_n=a_{n+1}$ holds for all sufficiently large $n$.
We use the following elementary observation that for a nested decreasing sequence $\{(a_n,b_n)\}_{n=1}^\infty$ of open intervals
with $b_n-a_n\to0$,
$\bigcap_{n=1}^\infty (a_n,b_n)=\emptyset$ holds
if and only if
$\{a_n\}_{n=1}^\infty$ or
$\{b_n\}_{n=1}^\infty$ is e.c.
There are three cases.
\medskip

\noindent{\it 
Case 1. $\bigcap_{n=1}^\infty I_n=\emptyset$.} 
If $\{I_n^l\}_n$ is e.c., then $\omega_n=\alpha_1$ for all sufficiently large $n$.
If
$\{I_n^r\}_n$ is e.c., then $\omega_n\in\{\beta_1,\ldots,\beta_m\}$ for all sufficiently large $n$.

\smallskip

\noindent{\it 
Case 2. $\bigcap_{n=1}^\infty (J_n^+\cap J_n^-)=\emptyset$}.
Then 
$\bigcap_{n=1}^\infty J_n^+=\emptyset$ or
$\bigcap_{n=1}^\infty J_n^-=\emptyset$.
If $\{J_n^{+,l}\}_n$ is e.c., then
$\omega_n=m+1$
for all sufficiently large $n$.
If $\{J_n^{+,r}\}_n$ is e.c., then $\omega_n=\beta_m$ for all sufficiently large $n$. 
If $\{J_n^{-,l}\}_n$ is e.c., then
$\omega_{-n}=\alpha_1$ for all sufficiently large $n$.
If $\{J_n^{-,r}\}_n$ is e.c.,
then
$\omega_{-n}=\alpha_m$
for all sufficiently large $n$.
\smallskip

\noindent{\it 
Case 3. $\bigcap_{n=1}^\infty K_n=\emptyset$.}
If $\{K_n^l\}_n$ is e.c., then
$\omega_{-n}\in\{\alpha_1,\ldots,\alpha_m\}$ for all sufficiently large $n$.
If $\{K_n^r\}_n$ is e.c.,
then
$\omega_{-n}=\beta_m$ for all sufficiently large $n$.
\medskip

\noindent In either of the cases, $\omega$ is contained in the right set in the equality in the lemma. 

To show the reverse inclusion, suppose that $\omega\in\Sigma_D$ is contained in the right set of the desired equality.
If there were $p\in X$
such that $\bar\pi(p)=\omega$,
there would be some $n\in\mathbb Z$ such that $f^n(p)\notin\bigcup_{i=1}^{2m}{\rm int}(\Omega_i)$, a contradiction.
Hence $\omega\notin\bar\pi(X)$.
 \end{proof}

From Lemma~\ref{cset} and Poincar\'e's recurrence theorem
we obtain the following.

\begin{cor}\label{cset-cor}
If $\nu\in\mathcal M(\sigma)$ satisfies $\nu(\bigcup_{i=1}^m
[\alpha_i])\nu(\bigcup_{i=1}^{m}[\beta_i])>0$, then 
 $\nu(\bar\pi(X))=1$.
\end{cor}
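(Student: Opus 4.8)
The plan is to combine Lemma~\ref{cset} with Poincar\'e's recurrence theorem applied to the shift $\sigma$ and to its inverse $\sigma^{-1}$. The set on the right-hand side of the equality in Lemma~\ref{cset} is a union of two pieces: the ``forward'' piece $\bigcup_{N\ge0}\bigcap_{n\ge N}\sigma^n(\bigcup_{i=1}^m[\alpha_i])$ and the ``backward'' piece $\bigcup_{N\ge0}\bigcap_{n\ge N}\sigma^{-n}(\bigcup_{i=1}^m[\beta_i])$. Since $\nu(\bar\pi(X)) = 1 - \nu(\Sigma_D\setminus\bar\pi(X))$, it suffices to show that $\nu$ gives measure zero to each of these two pieces under the stated hypothesis.

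First I would treat the forward piece. Fix $N\ge0$ and set $E = \bigcap_{n\ge N}\sigma^n(\bigcup_{i=1}^m[\alpha_i])$. The key observation is that $\omega\in E$ forces $\omega_n\in\{\alpha_1,\dots,\alpha_m\}$ for all $n\ge N$: indeed, if $\omega\in\sigma^n(\bigcup_{i=1}^m[\alpha_i])$ then $\sigma^{-n}\omega$ (which agrees with $\omega$ in coordinate $0$ shifted appropriately) has its $0$-th coordinate — equivalently $\omega$ has its $n$-th coordinate — in $\{\alpha_1,\dots,\alpha_m\}$. Hence $E\cap(\bigcup_{i=1}^m[\beta_i])$ can only consist of sequences whose $0$-th coordinate is a right bracket and all of whose coordinates from position $N$ on are left brackets; if $N\ge 1$ this intersection need not be empty, but the point is different: $E$ is forward-invariant under $\sigma$ up to a shift, so $\sigma^{-n}(E)\supset E$ for... more carefully, $\sigma(E)\subset\bigcap_{n\ge N-1}\sigma^n(\bigcup[\alpha_i])\subset E$ when $N\ge 1$, and for general $N$ one has $\sigma^N(E)\subset \bigcap_{n\ge 0}\sigma^n(\bigcup[\alpha_i])$, a set on which every coordinate with index $\ge0$ is a left bracket. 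Poincar\'e recurrence for $(\sigma,\nu)$ applied to the set $\bigcup_{i=1}^m[\beta_i]$ (which has positive measure by hypothesis) tells us that $\nu$-a.e.\ $\omega$ returns to $\bigcup_{i=1}^m[\beta_i]$ infinitely often in forward time, i.e.\ has infinitely many coordinates with positive index lying in $\{\beta_1,\dots,\beta_m\}$. Such an $\omega$ cannot lie in $\bigcap_{n\ge N}\sigma^n(\bigcup_{i=1}^m[\alpha_i])$ for any $N$, because that would require all but finitely many positive-index coordinates to be left brackets. Therefore $\nu$ gives measure zero to the forward piece.

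Symmetrically, applying Poincar\'e recurrence to $(\sigma^{-1},\nu)$ and the set $\bigcup_{i=1}^m[\alpha_i]$ (positive measure by hypothesis), $\nu$-a.e.\ $\omega$ has infinitely many coordinates with negative index lying in $\{\alpha_1,\dots,\alpha_m\}$, so cannot lie in $\bigcap_{n\ge N}\sigma^{-n}(\bigcup_{i=1}^m[\beta_i])$ for any $N$; hence the backward piece is also $\nu$-null. Adding the two, $\nu(\Sigma_D\setminus\bar\pi(X))=0$, which is the claim.

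The only mild subtlety — and the step I would be most careful about — is bookkeeping the relation between membership in $\sigma^n(\bigcup_{i=1}^m[\alpha_i])$ and the value of the coordinate $\omega_n$, together with the fact that it is the \emph{tail} behavior (all sufficiently large indices, in forward or backward time) that matters, which is exactly what recurrence contradicts; there is no deep obstacle here, just the need to state precisely that a sequence recurring infinitely often to the right brackets in forward time cannot eventually be all left brackets. Everything else is immediate from Lemma~\ref{cset}.
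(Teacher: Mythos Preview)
Your overall approach---combine Lemma~\ref{cset} with Poincar\'e recurrence---is exactly the paper's. Two points deserve attention.

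First, a bookkeeping slip (the very one you flagged). For the two-sided left shift one has $(\sigma^{-n}\omega)_0=\omega_{-n}$, so $\omega\in\sigma^n\bigl(\bigcup_i[\alpha_i]\bigr)$ means $\omega_{-n}\in\{\alpha_1,\dots,\alpha_m\}$, not $\omega_n$. Thus the first union in Lemma~\ref{cset} is the set of sequences whose \emph{negative}-index tail is eventually all left brackets, and the second is the set whose \emph{positive}-index tail is eventually all right brackets. This swaps which direction of recurrence handles which piece, but the structure of your argument survives the correction.

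Second, a genuine gap. Poincar\'e recurrence only guarantees that $\nu$-a.e.\ point \emph{of a set $B$ of positive measure} returns to $B$ infinitely often; it does not say that $\nu$-a.e.\ point of $\Sigma_D$ ever visits $B$. Your assertion that ``$\nu$-a.e.\ $\omega$ returns to $\bigcup_i[\beta_i]$ infinitely often'' is therefore not justified, and in fact the Corollary as stated fails without ergodicity: if $\lambda$ is any shift-invariant probability supported on $\{\beta_1,\dots,\beta_m\}^{\mathbb Z}\subset\Sigma_D$ and $\nu=\tfrac12\nu_\alpha+\tfrac12\lambda$, then $\nu$ charges both $\bigcup_i[\alpha_i]$ and $\bigcup_i[\beta_i]$, yet $\lambda$ lives entirely in $\Sigma_D\setminus\bar\pi(X)$ by Lemma~\ref{cset}, so $\nu(\bar\pi(X))\le\tfrac12$. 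The paper only ever applies the Corollary to the ergodic measures $\nu_\alpha$ and $\nu_\beta$, and for ergodic $\nu$ your argument (with the sign corrected) does go through, since then Birkhoff's theorem upgrades ``positive measure'' to ``$\nu$-a.e.\ orbit visits infinitely often in both time directions''.
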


 \subsection{Lyapunov exponents}\label{lyapunov-sec}
  Define $\varphi^u\colon \Sigma_{D}\to\mathbb R$ and $\varphi^c\colon \Sigma_{D}\to\mathbb R$ by 
  \[\begin{split}\varphi^u(\omega)&=\begin{cases}-\log a&\text{ on }  \bigcup_{i=1}^{m}[\alpha_i],\\
 -\log(1-ma)&\text{ on }  \bigcup_{i=1}^{m}[\beta_i],\ \text{and}\end{cases}\\
 \varphi^c(\omega)&=-H_1(\omega)\log m,\end{split}\]
respectively. For a measure $\nu\in\mathcal M(\sigma)$
and $t=u,c$ we put
$\chi^t(\nu)=\int\varphi^td\nu$.
We say $\omega\in \Sigma_{D}$ is {\it regular} if for each $t=u,c$
 there exists $\chi^t(\omega)\in\mathbb R$ such that
 \[\lim_{n\to\infty}\frac{1}{n} \sum_{k=0}^{n-1}\varphi^t(\sigma^k\omega)=
\lim_{n\to\infty}\frac{1}{n} \sum_{k=0}^{n-1}\varphi^t(\sigma^{-k}\omega) =\chi^t(\omega).\]
Since $\varphi^u$, $\varphi^c$ are continuous,
they induce a cocycle of $2\times2$ matrices over $(\Sigma_{D},\sigma)$.
 By Oseledec's theorem  \cite{Ose68},
 for each $\nu\in \mathcal M(\sigma)$ 
 the set of regular points has full $\nu$-measure. 
  For $\mu\in \mathcal M(f)$ with $\mu(X)=1$ and $t=u,c$ we put $\chi^t(\mu)=\chi^t(\mu\circ\bar\pi^{-1})$, and
call them {\it unstable, central Lyapunov exponents of $\mu$} respectively.
 If $\mu$ is ergodic,
 $\chi^u(\mu)$,  $\chi^c(\mu)$
equal the
exponential growth rates of the diagonal elements of the Jacobian matrices of $f$
in the $x,y$-directions along $\mu$-typical orbits respectively.




\subsection{Entropy bounds}\label{entropy}

Ruelle's inequality \cite[Theorem~2,b)]{Rue78} asserts that the entropy of an invariant Borel probability measure for a $C^1$ diffeomorphism of a compact Riemannian manifold does not exceed the integral of the sum of non-negative pointwise Lyapunov exponents.
As $f$ is not a diffeomorphism, we slightly modify the proof of
\cite[Theorem~2,b)]{Rue78} and obtain a version of Ruelle's inequality. 
\begin{lemma}\label{ruelle-ineq}
Let $\mu\in \mathcal M(f)$ be ergodic and satisfy $\mu(X)=1$. 
 Then $h(\mu)\leq\chi^u(\mu)+\max\{\chi^c(\mu),0\}.$
\end{lemma}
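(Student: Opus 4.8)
The plan is to adapt the proof of Ruelle's inequality \cite[Theorem~2,b)]{Rue78} to our piecewise affine setting, where the only nondifferentiability of $f=f_{a,b}$ occurs on a finite union of coordinate hyperplanes, which is a $\mu$-null set because $\mu(X)=1$ and $X$ consists of points whose entire orbit avoids these hyperplanes. So on a full-measure set the derivative $D_pf$ is defined and, by the structure of $f$, is a diagonal matrix with entries $(\lambda^x(p),\lambda^y(p),\lambda^z(p))$, where $\lambda^x(p)\in\{1/a,1/(1-ma)\}$, $\lambda^y(p)\in\{1/m,m\}$, and $\lambda^z(p)\in\{1/(1-mb),1/b\}<1$. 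Since $\mu$ is ergodic, the Lyapunov exponents in the $x$- and $y$-directions equal $\chi^u(\mu)=\int\log\lambda^x\,d\mu$ and $\chi^c(\mu)=\int\log\lambda^y\,d\mu$, while the $z$-exponent is negative; the sum of positive Lyapunov exponents is thus at most $\chi^u(\mu)+\max\{\chi^c(\mu),0\}$.

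The main step is the entropy estimate itself. Following Ruelle, I would fix $n\geq1$ and estimate $h(\mu)=\frac1n h(f^n,\mu)\leq\frac1n h(f^n,\mu,\mathcal P_n)$ for a suitable finite partition, or more precisely use a cover argument: partition $[0,1]^3$ into a grid of cubes of side $\epsilon$, and bound the number of $\epsilon$-cubes needed to cover $f^n(Q)$ for a typical $\epsilon$-cube $Q$. Because $f^n$ is piecewise affine with at most a controlled number $C_n$ of affine pieces (the pieces being intersections of $Q$ with $\bigcap_{k=0}^{n-1}f^{-k}(\Omega_{i_k})$), and because on each affine piece $f^n$ has diagonal derivative whose expansion factors are products of the single-step factors, the image $f^n(Q)$ is covered by at most a constant times $C_n\cdot\prod(\text{positive singular values of }D(f^n))$ many $\epsilon$-cubes. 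Integrating $\log$ of this count against $\mu$, dividing by $n$, using subadditivity / the ergodic theorem to replace the integrated log-Jacobian by the Lyapunov exponents, and letting $n\to\infty$ then $\epsilon\to0$ (in the correct order, so the combinatorial term $\frac1n\log C_n$ is killed since $C_n$ grows at most exponentially with a rate independent of $\epsilon$, hence negligible after we also let $\epsilon\to 0$ — here one must be a little careful and argue as in \cite{Rue78} that the number of smooth pieces of $f^n$ meeting a single small cube stays bounded as $\epsilon\to0$) yields $h(\mu)\leq\int(\log^+\lambda^x+\log^+\lambda^y+\log^+\lambda^z)\,d\mu=\chi^u(\mu)+\max\{\chi^c(\mu),0\}$, where $\log^+ t=\max\{\log t,0\}$ and we used $\lambda^z<1$ and $\chi^u(\mu)>0$.

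Alternatively, and perhaps more cleanly, I would invoke the variational-type inequality for Lyapunov exponents via the Margulis--Ruelle argument applied to the natural extension: since $f$ is already invertible, $h(\mu)=h(\mu,f)$, and one can run the standard proof verbatim on the open dense invariant set $\bigcup_i\mathrm{int}(\Omega_i)$, on which $f$ is a local diffeomorphism, noting that $\mu$ is supported there. The key point making the piecewise-affine case no harder than the smooth case is that the "discontinuity set" is a finite union of hyperplanes, so a small cube meets $O(1)$ pieces uniformly, and the derivative is literally constant on each piece, so there is no distortion to control and Ruelle's covering estimate becomes an exact count.

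The hard part will be handling the combinatorial factor cleanly: one must verify that the number of distinct affine pieces of $f^n$ intersecting a fixed small cube $Q$ is bounded independently of the side length of $Q$ (it is bounded by the number of length-$n$ admissible words in $\Sigma_{H\!C}$ compatible with $Q$'s location, but we need this to not blow up faster than the volume expansion so that after normalizing by $n$ and taking $\epsilon\to 0$ it contributes nothing). This is exactly the delicate counting that Ruelle handles, and in our setting the combinatorics is transparent because the partition elements $\Omega_i$ are products of intervals, so the count factors through the one-dimensional maps $F_a$ and the doubling/halving map in the $y$-coordinate; I expect this to go through with minor bookkeeping. A secondary technical point is to make sure $\chi^u(\mu)$, $\chi^c(\mu)$ as defined via $\bar\pi$ (integrals of $\varphi^u$, $\varphi^c$ over $\Sigma_D$) genuinely coincide with the Lyapunov exponents of $Df$; this follows from $\sigma\circ\bar\pi=\bar\pi\circ f$, the explicit formula for $Df$ on each $\Omega_i$, and $\mu(X)=1$.
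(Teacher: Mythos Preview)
Your overall strategy matches the paper's: adapt Ruelle's argument, exploiting that $Df$ is diagonal with exponents $\chi^u(\mu)>0$, $\chi^c(\mu)$, and a negative $z$-exponent. The main difference is the choice of partition. The paper does \emph{not} use a uniform $\epsilon$-grid; instead it constructs, for each $N$, a partition $\mathcal Q_N$ of $[0,1]^3$ into products $I\times J\times K$ of subintervals chosen so that $f^n|_Q$ is already affine for every $Q\in\mathcal Q_N$ and every $n\le N$ (the $x$-intervals $I$ are branch intervals of some $F_a^k$ with $k\ge N$, and the $y$-intervals have $m$-adic endpoints). Then $f^n(Q)$ is a single rectangular box, and counting how many elements of $\mathcal Q_N$ meet it is a one-line computation. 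This completely sidesteps the ``pieces'' problem you flag as the hard part. Your uniform-grid route can be made to work (choose $\epsilon$ small relative to the minimum gap in the discontinuity set of $f^n$, so each cube meets at most a bounded number of affine pieces, and absorb that bounded factor into $C$), but the paper's dynamically adapted partition is the clean way to do it and is what makes property (c) in Ruelle's scheme immediate.

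There is also a genuine slip in your last displayed equation: $\int\log^+\lambda^y\,d\mu=(\log m)\,\mu\!\left(\bigcup_{i=m+1}^{2m}\Omega_i\right)$, which is in general strictly larger than $\max\{\chi^c(\mu),0\}$ (for instance it equals $\frac12\log m>0$ when $\chi^c(\mu)=0$). Ruelle's bound uses the singular values of $Df^n$, not of $Df$: the $y$-entry of $Df^n$ at $p$ is $\exp\sum_{k=0}^{n-1}\log\lambda^y(f^kp)$, and it is $\max\{\frac1n\sum_{k<n}\log\lambda^y(f^kp),0\}$ that converges to $\max\{\chi^c(\mu),0\}$ by Birkhoff, not $\frac1n\sum_{k<n}\log^+\lambda^y(f^kp)$. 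Your earlier phrase ``positive singular values of $D(f^n)$'' is exactly the right object, and the paper's estimate (c) is written in precisely that form; but your final line, taken literally, would only yield the weaker inequality $h(\mu)\le\chi^u(\mu)+(\log m)\,\mu\!\left(\bigcup_{i>m}\Omega_i\right)$, which is not sharp enough for the applications to Theorems~B and~C.
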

\begin{proof}
We may assume $\mu$ is not supported on a periodic orbit, for otherwise the desired inequality is obvious.
We show that
for each integer $N\geq1$ there exists a finite partition $\mathcal Q_N$ of $[0,1]^3$ into Borel sets with the following properties: 

\begin{itemize}
\item[(a)] $\lim_{N\to\infty}\sup\{{\rm diam}(Q)\colon Q\in\mathcal Q_N\}=0$; 

\item[(b)] 
$\mu(\partial Q)=0$ for $Q\in\mathcal Q_N$;

\item[(c)] (cf.\cite[p.86~(a)]{Rue78}) there exists $C>0$, and
 for each $n\geq1$
there is $N(n)\geq1$ such that if $N\geq N(n)$, then for $Q\in\mathcal Q_N$ and $\omega\in\bar\pi(Q)$,
\[\#\{R\in \mathcal Q_N\colon R\cap f^n(Q)\neq\emptyset\}\leq C\exp\left(\sum_{k=0}^{n-1}\varphi^u(\sigma^k\omega )+\max\left\{\sum_{k=0}^{n-1}\varphi^c(\sigma^k\omega ),0\right\}\right).\]

\end{itemize}
Then the rest of the proof of \cite[Theorem~2,b)]{Rue78}
goes through with $f$ 
without any essential modification, and yields the desired inequality in Lemma~\ref{ruelle-ineq}.

Put $a_m=\min\{a,1-ma\}$.
For an interval $I\subset [0,1]$
we denote by $|I|$ its Euclidean length.
Let $\mathbb Q(r)$ denote the set of
rational numbers in $(0,1)$ of the form
$\sum_{k=1}^n\frac{d_k}{r^k}$, $d_k\in\{0,1\}$ for some $n\geq1$.
For each $N\geq1$, there exists a finite partition $\mathcal Q_N$ of $[0,1]^3$ into Cartesian products $I\times J\times K$ of 
subintervals $I$, $J$, $K$ of $[0,1]$ with the following properties:
there exists 
$k\geq N$ such that $F_a^k|_{I}$ is affine and $|F_a^k(I)|=1$,
  $a_m^{N+1}\leq |I|< a_m^N$;
the endpoints of $J$ are in $\mathbb Q(r)\cup\{0,1\}$, and
$\frac{1}{m^{N+1}}\leq |J|< \frac{1}{m^{N}}$;
$|K|=\frac{1}{m^{N}}$; $f^n|_{I\times J\times K}$ is affine for $1\leq n\leq N$.
Then (a) is obvious. For each $Q\in\mathcal Q_N$,
any ergodic measure in $\mathcal M(f)$ which gives measure $1$ to $\partial Q$ is supported on a periodic orbit. Hence (b) follows from the assumption on $\mu.$
To verify (c),
let $n\geq1$, $N\geq n$, $Q=I\times J\times K\in\mathcal Q_N$.
Then $f^n|_Q$ is affine,
and if $f^n(Q)=I'\times J'\times K'$ then
$|I'|=|I|\exp\sum_{k=0}^{n-1}\varphi^u(\sigma^k\omega)$,
$|J'|=|J|\exp\sum_{k=0}^{n-1}\varphi^c(\sigma^k\omega)$ for $\omega\in\bar\pi(Q)$ and $|K'|<|K|$. Hence, 
the number of elements of $\mathcal Q_N$ which intersect $f^n(Q)$
is bounded from above by
\[\left(a_m^{-1}\exp\sum_{k=0}^{n-1}\varphi^u(\sigma^k\omega)+2\right)\times
\left(m\exp\left\{\sum_{k=0}^{n-1}\varphi^c(\sigma^k\omega),0\right\}+2\right)\times 2,\]
for all $\omega\in\bar\pi(Q)$.
Taking $C=2a_m^{-1}m$ we obtain (c).
\end{proof}
  
  Lemma~\ref{ruelle-ineq} provides an upper bound on the entropy of an ergodic measure in terms of its Lyapunov exponents. 
 The next lemma provides an upper bound on
 the unstable Lyapunov exponent of an ergodic measure.
 Let $\delta\geq0$.
We say a measure $\mu\in\mathcal M(f)$ is {\it $\delta$-biased}
if $\mu(X)=1$ and $|\mu\left(\bigcup_{i=1}^m\Omega_i\right)-\mu\left(\bigcup_{i=m+1}^{2m}\Omega_i\right)|\leq\delta.$
Note that $\mu\in\mathcal M(f)$ 
with $\mu(X)=1$ is $\delta$-biased if and only if
$|\chi^c(\mu)|\leq\delta\log m$.
For $a\in(0,c_0)$ we put 
\[H(a)=-\log\sqrt{a(1-ma)}.\]
This number is the unstable Lyapunov exponent of 
$0$-biased ergodic measures.
Note that $H(c_1)=H(c_2)=\log(m+1)$
and $H(a)<\log (m+1)$ for $a\in(c_1,c_2)$.

\begin{lemma}\label{zero-u}
 Let $\delta>0$,
and let $\mu\in \mathcal M(f)$ be ergodic, $\delta$-biased and satisfy $\mu(X)=1$.
 Then $\chi^u(\mu)\leq(1+2\delta)H(a).$
\end{lemma}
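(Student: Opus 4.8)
The plan is to estimate the integral $\chi^u(\mu)=\int\varphi^u\,d\nu$, where $\nu=\mu\circ\bar\pi^{-1}\in\mathcal M(\sigma)$, purely in terms of the single number $p=\nu(\bigcup_{i=1}^m[\alpha_i])$. By definition $\varphi^u=-\log a$ on $\bigcup_{i=1}^m[\alpha_i]$ and $\varphi^u=-\log(1-ma)$ on $\bigcup_{i=1}^m[\beta_i]$, and since $\nu$ is shift invariant these two cylinder unions have $\nu$-measures $p$ and $1-p$ respectively. Hence
\[
\chi^u(\mu)=-p\log a-(1-p)\log(1-ma).
\]
The $\delta$-biased hypothesis, together with the observation recorded just before the lemma that $\mu$ is $\delta$-biased iff $|\chi^c(\mu)|\le\delta\log m$, says exactly that $|\,\nu(\bigcup_{i=1}^m[\alpha_i])-\nu(\bigcup_{i=m+1}^{2m}[\beta_i])\,|=|2p-1|\le\delta$, i.e. $p\in[\tfrac{1-\delta}{2},\tfrac{1+\delta}{2}]$.

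So the task reduces to a one-variable estimate: show that for $p$ in that interval,
\[
-p\log a-(1-p)\log(1-ma)\le(1+2\delta)H(a),
\]
where $H(a)=-\tfrac12\log\big(a(1-ma)\big)=-\tfrac12\log a-\tfrac12\log(1-ma)$. Write $p=\tfrac12+s$ with $|s|\le\tfrac{\delta}{2}$; then the left side becomes
\[
H(a)+s\big(\log(1-ma)-\log a\big)=H(a)+s\log\frac{1-ma}{a}.
\]
Since we have reduced to $a\ge b$ is irrelevant here but $a\in(0,c_0)$ with $c_0=\tfrac1m$ forces $0<a<\tfrac1m$, hence $1-ma\in(0,1)$; the sign of $\log\frac{1-ma}{a}$ can be either, so I bound by absolute values: the left side is at most $H(a)+\tfrac{\delta}{2}\big|\log\frac{1-ma}{a}\big|$. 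Finally $\big|\log\frac{1-ma}{a}\big|=|\log(1-ma)-\log a|\le|\log(1-ma)|+|\log a|=-\log(1-ma)-\log a=2H(a)$, because both $a$ and $1-ma$ lie in $(0,1)$ so their logs are negative. Combining, $\chi^u(\mu)\le H(a)+\tfrac{\delta}{2}\cdot 2H(a)=(1+\delta)H(a)\le(1+2\delta)H(a)$ since $H(a)>0$.

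The only genuinely delicate points are bookkeeping ones: first, justifying that $\chi^u(\mu)$ really equals $\int\varphi^u\,d\nu$ with $\nu=\mu\circ\bar\pi^{-1}$ — this is the definition given in Section~\ref{lyapunov-sec}, valid because $\mu(X)=1$ — and second, correctly translating the $\delta$-biased condition into the bound on $p$ via $\varphi^c=-H_1\log m$ and shift invariance, so that $\int H_1\,d\nu=\nu(\bigcup_{i=1}^m[\alpha_i])-\nu(\bigcup_{i=1}^m[\beta_i])=2p-1$. I do not expect a serious obstacle: once the problem is phrased as the scalar inequality above, it is an elementary consequence of $a,1-ma\in(0,1)$ and $H(a)>0$. (One could even get the sharper constant $(1+\delta)$, but $(1+2\delta)$ suffices for the later application and leaves room for the cruder estimates used in the surrounding argument.)
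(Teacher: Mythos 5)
Your proof is correct, and it is in fact cleaner than the argument in the paper. The paper's proof goes through Birkhoff's ergodic theorem: it approximates $\chi^u(\mu)$ by time averages $\frac{1}{n}\sum_{k=0}^{n-1}\varphi^u(\sigma^k\omega)$ along a $\mu\circ\bar\pi^{-1}$-typical orbit and controls these using the empirical frequency of the $\alpha$-symbols, which Birkhoff again places within $\delta$ of $\frac{1}{2}$ thanks to the $\delta$-biased hypothesis. You instead evaluate the integral $\chi^u(\mu)=\int\varphi^u\,d(\mu\circ\bar\pi^{-1})$ directly as a two-point weighted average, so neither the ergodic theorem nor, in fact, ergodicity of $\mu$ is needed. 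Both routes collapse to the same elementary estimate $qA+(1-q)B\le H(a)+|q-\tfrac12|\,(A+B)$ with $A=-\log a$ and $B=-\log(1-ma)$; your bookkeeping retains the true tolerance $|p-\tfrac12|\le\delta/2$ supplied by the $\delta$-biased condition, whereas the paper rounds this up to $\delta$ when absorbing the Birkhoff approximation error, which is why you obtain the sharper constant $(1+\delta)H(a)$ while the lemma is stated with $(1+2\delta)H(a)$. Either constant suffices for Lemma~\ref{ent-ap} and for Theorems~B and~C. One trivial slip in your write-up: $\nu\bigl(\bigcup_{i=m+1}^{2m}[\beta_i]\bigr)$ should read $\nu\bigl(\bigcup_{i=1}^{m}[\beta_i]\bigr)$, since the Dyck alphabet is $\{\alpha_1,\dots,\alpha_m,\beta_1,\dots,\beta_m\}$; the intended quantity is clear and nothing else is affected.
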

\begin{proof}
From the ergodicity of $\mu\circ\bar\pi^{-1}$ and 
 Birkhoff's ergodic theorem, for any $\epsilon>0$ there exists $n(\epsilon)\geq1$ such that for any $n\geq n(\epsilon)$ there exists a finite subset  $\Gamma_n$ of $ L_n(\Sigma_{D})$ such that for  $\gamma_0\cdots \gamma_{n-1}\in\Gamma_n$ and  $\omega\in[\gamma_0\cdots \gamma_{n-1}]$ we have 
 \begin{equation}\label{lambda-ua}\left|\frac{1}{n}\sum_{k=0}^{n-1}\1_{\bigcup_{i=1}^{m} [\alpha_i]}(\sigma^k\omega )-\frac{1}{2}\right|<\delta\quad\text{and}\quad\left|\frac{1}{n}\sum_{k=0}^{n-1}\varphi^u(\sigma^k\omega)- \chi^u(\mu)\right|<\epsilon,\end{equation}
 where $\1_{\bigcup_{i=1}^{m} [\alpha_i]}$ denotes
 the indicator function of 
 $\bigcup_{i=1}^{m} [\alpha_i]$.
 The first inequality in \eqref{lambda-ua} implies
\[\begin{split}\frac{1}{n}\sum_{k=0}^{n-1}\varphi^u(\sigma^k\omega)&\leq\left(\frac{1}{2}+\delta\right)\left(\log\frac{1}{a}+\log\frac{1}{1-ma}\right)=\left(1+2\delta\right)H(a).\end{split}\] 
Combining this with the second inequality in
 \eqref{lambda-ua} yields 
 $\chi^u(\mu)<(1+2\delta)H(a)+\epsilon.$
 Since $\epsilon>0$ is arbitrary, the desired inequality in the lemma holds.
    \end{proof}

Combining the previous two lemmas we obtain a criterion for the non-entropy approachability.
    \begin{lemma}\label{ent-ap}
If $a\in(c_1,c_2)$
 and 
 $\delta>0$, then
 any non-ergodic $\delta$-biased measure $\mu\in \mathcal M(f)$ with $h(\mu)>(1+2\delta)H(a)+\delta\log m$ is not entropy approachable by ergodic measures. 
\end{lemma}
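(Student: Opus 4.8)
The plan is to argue by contradiction. Suppose $\mu\in\mathcal M(f)$ is non-ergodic, $\delta$-biased, with $h(\mu)>(1+2\delta)H(a)+\delta\log m$, and suppose moreover that $\mu$ \emph{is} entropy approachable by ergodic measures: there are ergodic $\mu_n\in\mathcal M(f)$ with $\mu_n\to\mu$ and $\liminf_{n\to\infty}h(\mu_n)\ge h(\mu)$. Since $t\mapsto(1+2t)H(a)+t\log m$ is continuous, I would first fix $\delta'>\delta$ with $(1+2\delta')H(a)+\delta'\log m<h(\mu)$. The goal is then to prove $h(\mu_n)\le(1+2\delta')H(a)+\delta'\log m$ for all large $n$; this contradicts $\liminf_n h(\mu_n)\ge h(\mu)$ and finishes the proof.

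Next I would transfer the bias to the $\mu_n$. Since $\mu$ is $\delta$-biased we have $\mu(X)=1$, and as $X\subset\bigcup_{i=1}^{2m}{\rm int}(\Omega_i)$ the measure $\mu$ gives no mass to $\bigcup_{i=1}^{2m}\partial\Omega_i$; in particular $\mu(\partial(\bigcup_{i=1}^m\Omega_i))=0$. The portmanteau theorem then gives $\mu_n(\bigcup_{i=1}^m\Omega_i)\to\mu(\bigcup_{i=1}^m\Omega_i)$. Because $\bigcup_{i=1}^m\Omega_i$ and $\bigcup_{i=m+1}^{2m}\Omega_i$ partition $[0,1]^3$ and $|\mu(\bigcup_{i=1}^m\Omega_i)-\mu(\bigcup_{i=m+1}^{2m}\Omega_i)|\le\delta$, it follows that $|\mu_n(\bigcup_{i=1}^m\Omega_i)-\mu_n(\bigcup_{i=m+1}^{2m}\Omega_i)|\le\delta'$ for all large $n$.

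With the bias in hand I would bound $h(\mu_n)$ exactly as in Section~\ref{entropy}. Reading $\varphi^u$ and $\varphi^c$ as the piecewise-constant functions on $[0,1]^3$ that are constant on each $\Omega_i$ — with values $-\log a$ on $\bigcup_{i=1}^m\Omega_i$ and $-\log(1-ma)$ on $\bigcup_{i=m+1}^{2m}\Omega_i$ for $\varphi^u$, and $-\log m$, $\log m$ there for $\varphi^c$, so that they pull back under $\bar\pi$ to the functions of Section~\ref{lyapunov-sec} — Birkhoff's ergodic theorem for $(f,\mu_n)$ together with the bias bound above shows, just as in the proof of Lemma~\ref{zero-u}, that $\int\varphi^u\,d\mu_n\le(1+2\delta')H(a)$, while the bias bound also gives $\max\{\int\varphi^c\,d\mu_n,0\}\le\delta'\log m$. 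Combined with the Ruelle-type inequality $h(\mu_n)\le\int\varphi^u\,d\mu_n+\max\{\int\varphi^c\,d\mu_n,0\}$, this yields $h(\mu_n)\le(1+2\delta')H(a)+\delta'\log m$, the desired bound.

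The one point requiring care — and what I expect to be the main obstacle — is that Lemmas~\ref{ruelle-ineq} and \ref{zero-u} are stated for measures $\mu$ with $\mu(X)=1$, whereas an ergodic $\mu_n\in\mathcal M(f)$ only satisfies $\mu_n(X)\in\{0,1\}$ and could conceivably have $\mu_n(X)=0$. I would dispose of this by noting that neither argument actually needs the coding: Lemma~\ref{zero-u} is Birkhoff's theorem for $(f,\mu_n)$ applied to $\varphi^u$ and to the indicator of $\bigcup_{i=1}^m\Omega_i$, which is available for any ergodic $\mu_n$; and in the proof of Lemma~\ref{ruelle-ineq} the partitions $\mathcal Q_N$ and the counting estimate (c) use only that $f^n|_Q$ is affine with $x$- and $y$-expansion factors $\exp\sum_{k=0}^{n-1}\varphi^u(f^kq)$ and $\exp\sum_{k=0}^{n-1}\varphi^c(f^kq)$ for a point $q\in Q$, which makes sense whether or not $Q$ meets $X$. (Alternatively one first checks that any ergodic $\mu_n$ with $\mu_n(X)=0$ is carried by one of finitely many closed $f$-invariant subsets of $[0,1]^3$ on which one of the three coordinate directions of $f$ is degenerate, each disjoint from $X$; since $\mu(X)=1$, the portmanteau theorem then precludes $\mu_n\to\mu$, so $\mu_n(X)=1$ for all large $n$ and Lemmas~\ref{ruelle-ineq} and \ref{zero-u} apply verbatim.) Either way the contradiction is reached.
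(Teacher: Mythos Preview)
Your proof is correct and follows the same route as the paper's: pass the $\delta$-bias from $\mu$ to the approximating ergodic $\mu_n$ via weak* convergence (the paper phrases this as $\chi^c(\mu_n)\to\chi^c(\mu)$, using that $\mu$ does not charge the discontinuity hyperplane $\{x=ma\}$), then combine Lemmas~\ref{ruelle-ineq} and \ref{zero-u} to obtain $\limsup_n h(\mu_n)\le(1+2\delta')H(a)+\delta'\log m<h(\mu)$. On the technical point you flag, the paper's resolution is simpler than either of your alternatives: since $\liminf_n h(\mu_n)\ge h(\mu)>0$, one has $h(\mu_n)>0$ for large $n$, and (as noted in the proof of Theorem~B) any ergodic $f$-invariant measure with $\mu(X)=0$ is supported on a periodic orbit and hence has zero entropy---so $\mu_n(X)=1$ automatically and the lemmas apply verbatim; your second parenthetical alternative (finitely many closed invariant sets, then portmanteau) is not clearly correct as stated, since it is not obvious that there are only \emph{finitely many} such sets.
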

\begin{proof}
Let $\mu\in\mathcal M(f)$ and $\delta>0$ be as in the lemma.
Since $\mu$ is $\delta$-biased
we have $|\chi^c(\mu)|\leq\delta\log m.$
   Let $\{\mu_n\}_{n=1}^\infty$ be a sequence of ergodic measures in $\mathcal M(f)$ with positive entropy 
   such that
   $\mu_n\to\mu$. 
   Then we have $\mu_n(X)=1$ for $n\geq1$.
    Since $\mu$ is $f$-invariant, it does not charge 
  the wandering set 
    $\{(x,y,z)\in[0,1]^3\colon x=ma\}$ that is the discontinuity of $\varphi^c\circ\bar\pi$. Hence we have $\lim_{n\to\infty}\chi^c(\mu_n)=\chi^c(\mu)$.
    In particular, for any $\epsilon>0$ there exists $N\geq1$ such that $\mu_n$ is $(\delta+\epsilon)$-biased for all $n\geq N$.
By Lemmas~\ref{ruelle-ineq} and \ref{zero-u},
    \[\begin{split}\limsup_{n\to\infty} h(\mu_n)&\leq\limsup_{n\to\infty}\chi^u(\mu_n)+\limsup_{n\to\infty}\max\{\chi^c(\mu_n),0\}\\
    &\leq(1+2\delta )
    H(a)+\delta\log m< h(\mu),\end{split}\] which means that
    $\mu$ is not entropy approachable
   by ergodic measures. 
   \end{proof}

\subsection{Approximation by periodic measures}\label{per-sec}
To show one part of Theorem~C, we need the next lemma that is based on the symmetry of the Dyck shift.

 \begin{lemma}\label{dyck-ap}
 There is a sequence $\{\nu_n\}_{n=1}^\infty$ of ergodic measures in $\mathcal M(\sigma)$ 
 such that each $\nu_n$ is supported on a periodic orbit,
 satisfies $\chi^c(\nu_n)=0$,
 and 
$\nu_n\to\frac{1}{2}(\nu_{\alpha}+\nu_{\beta})$.
\end{lemma}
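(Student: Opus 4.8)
The plan is to construct the periodic measures $\nu_n$ explicitly as equidistributions on periodic orbits of the Dyck shift, using the built-in symmetry $\alpha_i\leftrightarrow\beta_i$ that swaps the roles of $\nu_\alpha$ and $\nu_\beta$. The idea is that the two ergodic mmes $\nu_\alpha$, $\nu_\beta$ are each well-approximated by periodic orbits consisting of ``mostly left brackets'' (resp.\ ``mostly right brackets''), and by concatenating a long block of the first type with a long block of the second type into one periodic word, the resulting periodic measure is close to $\tfrac12(\nu_\alpha+\nu_\beta)$. The condition $\chi^c(\nu_n)=0$ translates, via $\varphi^c(\omega)=-H_1(\omega)\log m$ and the fact that on a periodic orbit $\int\varphi^c\,d\nu_n$ is $-\tfrac{1}{p}(H_p-H_0)\log m$ for the period $p$, into the requirement that the periodic word have equal numbers of $\alpha$-type and $\beta$-type symbols; this is automatic from the symmetric construction.

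First I would fix $n$ and choose, using that $\nu_\alpha$ is Bernoulli (hence has periodic orbits equidistributing toward it), a word $u^{(n)}\in L(\Sigma_D)$ of length $\ell_n\to\infty$ that lies in $\{\alpha_1,\dots,\alpha_m\}^{\ell_n}$ \emph{up to a small fraction of edits} — more precisely, a word whose empirical distribution of finite subwords is within $1/n$ of that of $\nu_\alpha$ and in which every right bracket that appears is matched. Concretely one can take $u^{(n)}$ to be a long $\nu_\alpha$-generic word in $\Sigma_\alpha$ with the symbol $\beta$ replaced by, say, $\beta_1$, and then prepend enough $\alpha_1$'s to keep all brackets legally closed; since $\nu_\alpha(\Sigma_\alpha(0,\beta))=1/(m+1)$, only an $O(1/(m+1))$ fraction of positions are right brackets, so this costs a controlled correction. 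Let $v^{(n)}$ be the ``mirror'' word obtained from $u^{(n)}$ by the symmetry $\alpha_i\mapsto\beta_i$, $\beta_i\mapsto\alpha_i$ and reversing the order; this is a word that is $\nu_\beta$-generic. Then set $w^{(n)}=u^{(n)}v^{(n)}$, check that $w^{(n)}$ is admissible in $\Sigma_D$ — i.e.\ ${\rm red}$ of every subword is nonzero, which holds because $u^{(n)}$ leaves only a positive ``height'' $H$ and $v^{(n)}$ only decreases heights that $u^{(n)}$ created — and let $\nu_n$ be the uniform measure on the periodic orbit of period $2\ell_n$ generated by $w^{(n)\infty}$.

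Next I would verify the three required properties. Ergodicity and support on a periodic orbit are immediate. For $\chi^c(\nu_n)=0$: by periodicity $\int\varphi^c\,d\nu_n=-\tfrac{\log m}{2\ell_n}\sum_{k=0}^{2\ell_n-1}H_1(\sigma^k\omega)$, and the telescoping identity $\sum_{k} H_1(\sigma^k\omega)$ over a full period equals a fixed combinatorial quantity that vanishes precisely when the total count of $\alpha$-brackets equals that of $\beta$-brackets in $w^{(n)}$; the mirror construction forces this balance exactly, so $\chi^c(\nu_n)=0$ (one must make sure the $\alpha_1$-padding is also balanced by matching $\beta_1$-padding, which the mirroring again handles). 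For weak$^*$ convergence $\nu_n\to\tfrac12(\nu_\alpha+\nu_\beta)$: for any cylinder $[\gamma_0\cdots\gamma_{k-1}]$, the frequency of that pattern along $w^{(n)\infty}$ is $\tfrac12(\text{frequency in }u^{(n)})+\tfrac12(\text{frequency in }v^{(n)})$ up to an $O(k/\ell_n)$ boundary error from the two junctions; the first tends to $\nu_\alpha[\gamma_0\cdots\gamma_{k-1}]$ and the second to $\nu_\beta[\gamma_0\cdots\gamma_{k-1}]$ by genericity, giving the claim since cylinders generate the topology.

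\textbf{The main obstacle} I expect is the \emph{admissibility} bookkeeping: one must arrange that $u^{(n)}$ (a $\nu_\alpha$-generic word with its few right brackets correctly matched \emph{inside} $u^{(n)}$) concatenated with its mirror $v^{(n)}$, and then infinitely repeated, never produces a forbidden word — in particular the ``wrap-around'' from the end of $v^{(n)}$ back to the start of $u^{(n)}$ and the junction in the middle must not create an unmatched right bracket followed later by the wrong left bracket, i.e.\ an occurrence of ${\rm red}=0$. Controlling this requires choosing the padding lengths so that after $u^{(n)}$ the height $H$ is large and positive, and after $v^{(n)}$ it returns exactly to where it started, with the mirrored structure guaranteeing that every right bracket of $v^{(n)}$ closes a specific earlier left bracket; this is a finite, if slightly delicate, combinatorial verification. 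A secondary subtlety is ensuring simultaneously that $\chi^c(\nu_n)=0$ \emph{exactly} (not just approximately) while keeping the empirical distributions close to $\nu_\alpha,\nu_\beta$ — handled by doing all the ``repair'' edits in matched $\alpha_1\beta_1$ pairs, whose asymptotic density is $O(1/n)$ and hence invisible in the limit.
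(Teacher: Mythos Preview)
Your approach is essentially the paper's: take a $\nu_\alpha$-generic block, mirror it via $\alpha_i\leftrightarrow\beta_i$ together with order reversal, concatenate, and repeat periodically. The symmetry $\Phi$ you implicitly use is exactly the paper's map $\Phi((\omega_n))=(\rho(\omega_{-n}))$, and the paper likewise deduces $\nu_\beta=\nu_\alpha\circ\Phi^{-1}$ and weak$^*$ convergence from genericity.

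Where your execution diverges from the paper --- and where it goes wrong --- is in the ``concrete'' construction of $u^{(n)}$ and the padding/repair machinery. Taking a $\lambda_\alpha$-generic word in $\Sigma_\alpha$ and replacing each $\beta$ by $\beta_1$ does \emph{not} produce an admissible word in $\Sigma_D$ (e.g.\ $\alpha_2\beta\mapsto\alpha_2\beta_1$, whose reduction is $0$), nor would it be $\nu_\alpha$-generic even if legal, since under $\nu_\alpha$ the symbols $\beta_1,\dots,\beta_m$ all occur with positive frequency. The paper avoids all of this: it simply takes the first $n$ coordinates $\omega_0\cdots\omega_{n-1}$ of a $\nu_\alpha$-generic point $\omega\in\Sigma_D$, with no preprocessing whatsoever, and sets $\zeta^{(n)}$ to be the bi-infinite repetition of $\omega_0\cdots\omega_{n-1}\,\rho^*(\omega_0\cdots\omega_{n-1})$, where $\rho^*(\gamma_1\cdots\gamma_n)=\rho(\gamma_n)\cdots\rho(\gamma_1)$.

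The admissibility ``bookkeeping'' you flag as the main obstacle is handled by a clean algebraic fact, with no height-tracking or padding: for \emph{any} $\lambda\in L(\Sigma_D)$ one has ${\rm red}(\lambda)=1$ or ${\rm red}(\lambda)=\xi\eta$ with $\xi$ a word in $\beta$'s and $\eta$ a word in $\alpha$'s; in either case ${\rm red}\big((\lambda\rho^*(\lambda))^j\big)\neq 0$ for all $j\ge1$, because $\rho^*(\xi)$ is a word in $\alpha$'s, $\rho^*(\eta)$ is a word in $\beta$'s, and ${\rm red}(\eta\rho^*(\eta))={\rm red}(\rho^*(\xi)\xi)=1$. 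This single monoid computation replaces your padding, repair edits, and wrap-around analysis entirely, and the exact balance $\chi^c(\nu_n)=0$ is automatic since $\rho^*$ swaps left and right brackets in equal numbers.
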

\begin{proof}
We define $\rho\colon D\circlearrowleft$ by
$\rho (\alpha_i)=\beta_i$ and $\rho(\beta_i)=\alpha_i$
for $1\le i\le m$, and
 define an injective map $\Phi\colon \Sigma_D\to D^\mathbb Z$
by $\Phi((\omega_n)_{n\in\mathbb Z})=(\rho(\omega_{-n}))_{n\in\mathbb Z}$.
For $n\geq1$ and $\gamma_1\cdots \gamma_n\in D^n$ we set
$\rho^{\ast}(\gamma_1\cdots \gamma_n)=\rho(\gamma_n)\cdots \rho(\gamma_1)$. Later we will show that
\begin{itemize}
\item[(a)]
$\rho^{\ast}(\gamma_1\cdots \gamma_n)\in L(\Sigma_D)$, and

\item[(b)]
$(\gamma_1\cdots \gamma_n\rho^{\ast}(\gamma_1\cdots \gamma_n))^j\in L(\Sigma_D)$
for $j\ge 1$.    
\end{itemize}
Since $\nu_\alpha$ is ergodic, 
there exists
 $\omega\in\Sigma_D$ such that 
$\frac{1}{n}\sum_{k=0}^{n-1}\delta_{\sigma^k\omega }
\rightarrow \nu_{\alpha}$
where $\delta_{\sigma^k\omega}$ denotes
the unit point mass at $\sigma^k\omega$.
By (a), 
$\Phi$ induces a homeomorphism on $\Sigma_D$ 
and satisfies $\Phi\circ\sigma=\sigma^{-1}\circ\Phi$.
Moreover we have
   $h(\nu)=h(\nu\circ\Phi^{-1})$
    and $\nu(\bigcup_{i=1}^m[\alpha_i])
    =\nu\circ\Phi^{-1}(\bigcup_{i=1}^m[\beta_i])$
    for $\nu\in\mathcal M(\sigma)$.    
It follows that $\nu_{\beta}=\nu_{\alpha}\circ\Phi^{-1}$.
Then we have $\frac{1}{n}\sum_{k=0}^{n-1}\delta_{\sigma^{-k}(\Phi(\omega ))}
\rightarrow \nu_{\beta}$.
By (b),
for $n\ge 1$ the sequence
\[\zeta^{(n)}=\cdots  \omega_0\cdots\omega_{n-1}\rho^{\ast}(\omega_0\cdots\omega_{n-1})
\omega_0\cdots\omega_{n-1}\rho^{\ast}(\omega_0\cdots\omega_{n-1})\cdots\] 
in
$D^\mathbb Z$ is contained in $\Sigma_D$, and is a periodic point of $\sigma$.
Let $\nu_n\in\mathcal M(\sigma)$ denote the measure
supported on the orbit of $\zeta^{(n)}$.
Then we obtain $\chi^c(\nu_n)=0$ and $\nu_n\rightarrow\frac{1}{2}(\nu_{\alpha}+\nu_{\beta})$  as required in the lemma.

 By the relations in Section~\ref{Dyck}, for each $\lambda=\gamma_1\cdots \gamma_n\in L(\Sigma_D)$ 
either (i) ${\rm red}(\lambda)=1$, or (ii) 
${\rm red}(\lambda)=\xi\eta$
for some $\xi\in L(\{\beta_1,\ldots,\beta_m\}^{\mathbb{Z}})$ and
$\eta\in L(\{\alpha_1,\ldots,\alpha_m\}^{\mathbb{Z}})$. In case (i) we have
 ${\rm red}(\rho^{\ast}(\lambda ))=1$
and ${\rm red}((\lambda\rho^{\ast}(\lambda ))^j)=1$ for $j\ge 1$,
which implies (a) (b).
In case (ii) we have ${\rm red}(\rho^{\ast}(\lambda))=
\rho^{\ast}(\eta)\rho^{\ast}(\xi)$ and
 $\rho^{\ast}(\eta)\in
L(\{\beta_1,\ldots,\beta_m\}^{\mathbb{Z}})$,
 $\rho^{\ast}(\xi)\in
L(\{\alpha_1,\ldots,\alpha_m\}^{\mathbb{Z}})$, and so
 (a) holds. Since
${\rm red}(\rho^{\ast}(\xi)\xi)=
{\rm red}(\eta\rho^{\ast}(\eta))=1$
we have ${\rm red}((\lambda\rho^{\ast}(\lambda ))^j)=\xi\rho^{\ast}(\xi)$
for $j\ge 1$,
which implies (b).
\end{proof}


\subsection{Proof of Theorem~B}\label{pfthmb}
Calculations based on \eqref{ergmme} give
$\nu_\alpha(\bigcup_{i=1}^m[\alpha_i])=\frac{m}{m+1}$,
$\chi^c(\nu_\alpha)=-\frac{m-1}{m+1}\log m$ and
$\nu_\beta(\bigcup_{i=1}^{m}[\beta_i])=\frac{m}{m+1}$,
$\chi^c(\nu_\beta)=\frac{m-1}{m+1}\log m$.
From these equalities and
Corollary~\ref{cset-cor} we obtain
$\nu_\alpha(A_\alpha\cap \bar\pi(X))=1$ and $\nu_\beta(A_\beta\cap\bar\pi(X))=1$.

 By Lemma~\ref{m-bij}, $\bar\pi$
 induces a homeomorphism between
 $A_\alpha\cap\bar\pi(X)$ and $\bar\pi^{-1}(A_\alpha)$ that commutes with $\sigma$ and $f$. 
   Clearly,
   if $U$ is a non-empty open subset of $[0,1]^3$ then $\bar\pi(U)$ contains an open set.
Hence, the measure
 $\mu_\alpha=\nu_{\alpha}\circ\bar\pi$ is Bernoulli of entropy $\log (m+1)$,
  charges any non-empty open set, and satisfies
  $\mu_\alpha(\bigcup_{i=1}^m\Omega_i)=\frac{m}{m+1}$.
  By the same reasoning, we conclude that the measure
  $\mu_\beta=\nu_{\beta}\circ\bar\pi$ is Bernoulli of entropy $\log (m+1)$ and charges any non-empty open set, and satisfies $\mu_\beta(\bigcup_{i=m+1}^{2m}\Omega_i)=\frac{m}{m+1}$.

 Let
$a\in[c_1,c_2]$ and let
 $\mu\in\mathcal M(f)$ be ergodic.
 If $\mu(X)=0$ then $\mu$ is supported on a periodic orbit and $h(\mu)=0$. 
 Suppose $\mu(X)=1$. If $\mu(\bar\pi^{-1}(A_0))=0$,
then
$\mu(\bar\pi^{-1}(A_\alpha))=1$ or
$\mu(\bar\pi^{-1}(A_\beta))=1$.
From Lemma~\ref{m-bij} and the variational principle \cite{Bow75} for the Dyck shift, we obtain
$h(\mu)\leq\log(m+1)$.
If $\mu(\bar\pi^{-1}(A_0))=1$
then $\chi^c(\mu)=0$, and
Lemma~\ref{zero-u}
 gives $\chi^u(\mu)\leq H(a)$.
This together with Lemma~\ref{ruelle-ineq} yields
$h(\mu)\leq H(a)\leq\log(m+1)$.
The last inequality is strict if $a\in (c_1,c_2)$.
\qed

\subsection{Proof of Theorem~C}\label{pfthmc}
Let 
$\{\nu_n\}_{n=1}^\infty$ be a sequence in $\mathcal M(\sigma)$
for which the conclusion of Lemma~\ref{dyck-ap} holds.
Since each $\nu_n$ is approximated arbitrarily well in the weak* topology by an ergodic measure which is supported on a periodic orbit and has positive central Lyapunov exponent, 
 there exists a sequence $\{\tilde\nu_n\}_{n=1}^\infty$ of ergodic measures supported on a periodic orbit and satisfying
 $\tilde\nu_n(A\cap\bar\pi(X))=1$, $\tilde\nu_n\to \frac{1}{2}(\nu_{\alpha}+\nu_{\beta})$. 
We have 
$\frac{1}{2}(\nu_{\alpha}+\nu_{\beta})(A\cap\bar\pi(X))=1$, and so
by Lemma~\ref{m-bij},
 $\tilde\nu_n\circ\bar\pi$ is an ergodic measure in $\mathcal M(f)$  
satisfying $\tilde\nu_n\circ\bar\pi\to\frac{1}{2}(\nu_{\alpha}+\nu_{\beta})\circ\bar\pi=\frac{1}{2}(\mu_{\alpha}+\mu_{\beta})$.
We have verified that $\frac{1}{2}(\mu_{\alpha}+\mu_{\beta})$
is approachable by ergodic measures.
By Theorem~B, this measure is $0$-biased, and so not entropy approachable by Lemma~\ref{ent-ap}.\qed


\section*{Appendix: the Motzkin system}\label{Motzkin}
The Motzkin system is a version of the Dyck system.
For integers $m\geq2$ and $n\geq1$,
the $(m,n)$-Motzkin system is a subshift
of $m$ brackets with the standard bracket rule and $n$ additional symbols,
the units \cite{Ino10,M04}. 
The $(m,n)$-Motzkin system has exactly two ergodic measures \cite{Ino10} of maximal entropy $\log (m+n+1)$.
All arguments and results in this paper can be generalized to include the Motzkin system. Indeed,
for any $(m,n)$ 
there are piecewise affine maps on $[0,1]^2$ and $[0,1]^3$
whose coding spaces are isomorphic to the one and two-sided $(m,n)$-Motzkin shifts respectively. For such a map on $[0,1]^2$ in the case $(m,n)=(2,1)$, see FIGURE~3.

\begin{figure}
\begin{center}
\includegraphics[height=3.4cm,width=9.5cm]
{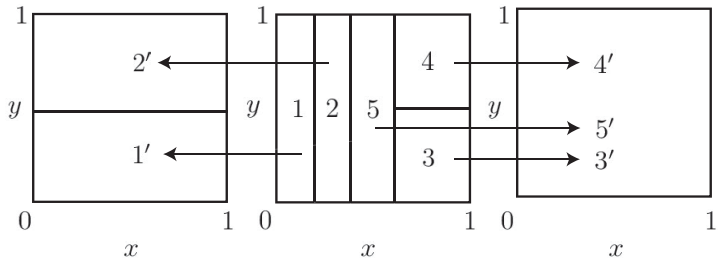}
\caption
{A piecewise affine map on $[0,1]^2$ whose coding space is isomorphic to the $(2,1)$-Motzkin system.}
\end{center}
\end{figure}

 \subsection*{Acknowledgments}
We thank Naoya Sumi for fruitful discussions. 
This research was supported by the JSPS KAKENHI 
19K21835, 20H01811, 21K03321.


\begin{thebibliography}{10}
     \bibitem{AS70} Ralph Abraham and Stephen Smale,
    {\it Nongenericity of $\Omega$-stability},
    Global Analysis (Proc. Symp. Pure Math. Vol. XIV Berkley Calif., 1968)
  (1970), 5--8.

  \bibitem{AU68} Alfred V. Aho and Jeffrey D. Ullman,
  {\it The theory of languages}, Math. Systems Theory {\bf 2} (1968), 97--125.
  
  \bibitem{AY13} Masayuki Asaoka and Kenichiro Yamamoto, {\it On the large deviation rates of non-entropy-approachable measures},
  Discrete Contin. Dyn. Syst. {\bf 33} (2013), 4401--4410.
  
  
    


    \bibitem{Bow71} Rufus Bowen, {\it Periodic points and measures for Axiom A diffeomorphisms}, 
Trans. Amer. Math. Soc. 
{\bf 154} (1971), 377--397.
     \bibitem{Bow75} Rufus Bowen,  {\it Equilibrium states and the ergodic theory of Anosov diffeomorphisms,} Second revised edition. Lecture Notes in Mathematics {\bf 470} Springer-Verlag, Berlin 2008.
    
    
    
    \bibitem{Bu09} J\'er\^ome Buzzi, {\it Maximal entropy measures for piecewise   affine surface homeomorphisms}, Ergodic Theory Dynam. Systems.
   {\bf 29} (2009),  1723--1763.
    
   \bibitem{BCS22} J\'er\^ome Buzzi, Sylvain Crovisier, and Omri Sarig,
   {\it Measures of maximal entropy for surface diffeomorphisms},
   Ann. of Math. {\bf 195} (2022), 421--508.
    
    
    


 
 
 


\bibitem{CliPav19} Vaughn Climenhaga and Ronnie Pavlov,
{\it One-sided almost specification and intrinsic ergodicity}, Ergodic Theory Dynam. Systems. {\bf 39} (2019),  2456--2480.

 \bibitem{DGSY94} Silvinia Dawson, Celso Grebogi, Tim Sauer, and James A. Yorke, {\it Obstructions to shadowing when a Lyapunov exponent fluctuates about zero},  Phys. Rev. Lett. {\bf 73} (1994), 1927. 

\bibitem{DGS76} Manfred Denker, Christian Grillenberger, and Karl Sigmund,
   {\it Ergodic Theory on Compact Spaces}, Lecture Notes in Mathematics, {\bf 527} Springer-Verlag,
   Berlin-New York (1976)
   
   
 

\bibitem{EKW94} Alexander Eizenberg, Yuri Kifer, and Benjamin Weiss, {\it Large deviations for $\mathbb Z^d$-actions},
Comm. Math. Phys. {\bf 164} (1994), 433-454.
  
    \bibitem{GK18} Katrin Gelfert and Dominik Kwietniak, {\it On density of ergodic measures and generic points}, Ergodic Theory Dynam. Systems. {\bf 38} (2018), 1745--1767.

    
    
    
     
       \bibitem{Hay13} Nicolai T. A. Haydn, {\it Phase transitions in one-dimensional subshifts}, Discrete Contin. Dyn. Syst. {\bf 33} (2013),  1965--1973.
       
       \bibitem{Hof79} Franz Hofbauer, {\it On intrinsic ergodicity of
    piecewise monotonic transformations with positive entropy},
    Israel J. Math. {\bf 34} (1979), 213--237.
    
   
    
    
    \bibitem{Hof81} Franz Hofbauer, {\it On intrinsic ergodicity of
    piecewise monotonic transformations with positive entropy}, II.
    Israel J. Math. {\bf 38} (1981), 107--115.
       
       \bibitem{Ino10} Kokoro Inoue, {\it The zeta function, periodic points and entropies of the Motzkin shift},
       arXiv:math/0602100
       
       \bibitem{KKGOY} Eric J. Kostelich, Ittai Kan, Celso Grebogi, Edward Ott, and James A. Yorke, {\it Unstable dimension variability: A source of nonhyperbolicity in chaotic systems},
Phys. D {\bf 109}
(1997), 81--90.
   
    \bibitem{Kri74} Wolfgang Krieger, {\it On the uniqueness of the equilibrium state}, Math. Systems Theory {\bf 8} (1974/75), 97--104.
  
     
     
     \bibitem{KOR16} Dominik Kwietniak, Piotr Oprocha, and Micha{\l} Rams, {\it On entropy of dynamical systems with almost specification}, Israel J. Math. {\bf 213} (2016), 475--503.
    
   
    \bibitem{M04} Kengo Matsumoto, {\it A simple purely infinite $C^*$-algebra associated with a lambda-graph system of the Motzkin shift}, Math. Z.  {\bf 248} (2004), 369--394.
  
    
  


    
  
  


  \bibitem{Ose68} Valery Oseledec, {\it A multiplicative ergodic theorem. Characteristic Ljapunov, exponents of dynamical systems}, Trudy Moskov. Mat. Ob\v{s}\v{c}.
  {\bf 19} (1968), 179--210.
  
  
   \bibitem{Pav16} Ronnie Pavlov, {\it On intrinsic ergodicity and weakenings of the specification property}, Adv. Math. {\bf 295} (2016), 250--270.
  
  
  
  \bibitem{Pet86} Karl Petersen, {\it Chains, entropy, coding}, Ergodic Theory Dynam. Systems.
   {\bf 6} (1986), 415--448. 
   
  
    
   
  \bibitem{Rue78} David Ruelle, {\it An inequality for the entropy of differentiable maps},
 Bol. Soc. Brazil. Mat., {\bf 9} (1978), 83--87.
 
  \bibitem{Rue04} David Ruelle, {\it Thermodynamic formalism. The mathematical structures of classical equilibrium statistical mechanics.} 
Second edition. Cambridge University Press (2004)
    
      \bibitem{STY21} Yoshitaka Saiki, Hiroki Takahasi, and James A. Yorke,
{\it Piecewise linear maps with heterogeneous chaos}, Nonlinearity, {\bf 34} (2021), 5744--5761.

\bibitem{Sin72}
Yakov G. Sina{\u\i},
{\it Gibbs measures in ergodic theory},
 Uspehi Mat. Nauk. {\bf 27} (1972), 21--64.
 
 \bibitem{Tho06} Klaus Thomsen, {\it On the ergodic theory of synchronized systems}, 
Ergodic Theory Dynam. Systems. {\bf 26} (2006), 1235--1256.
 






 

    \end{thebibliography}
\end{document}